\documentclass[11pt]{article}
\usepackage[T1]{fontenc}
\usepackage{latexsym,amssymb,amsmath,amsfonts,amsthm}
\usepackage{graphics}
\usepackage[section]{placeins}
\usepackage{graphicx}
\usepackage{mathrsfs}
\usepackage{subfigure}
\usepackage{float}
\usepackage{color}
\usepackage{url,epsfig,tikz}
\usepackage{subfigure}
\usepackage{pgfplots}
\usepackage{epstopdf}
\usetikzlibrary{shapes.geometric}

\newcommand{\R}{{\mathbb R}}

\newcommand{\N}{{\mathbb N}}

\newcommand{\GG}{{\mathbb G}}
\newcommand{\I}{{\mathbb I}}

\newcommand{\be}{\begin{eqnarray}}
\newcommand{\ben}{\begin{eqnarray*}}
\newcommand{\en}{\end{eqnarray}}
\newcommand{\enn}{\end{eqnarray*}}

\newcommand{\pa}{\partial}

\newcommand{\real}{{\rm Re\,}}

\newcommand{\s}{\mathbb{S}}

\newcommand{\om}{\omega}

\newcommand{\rb}{\right)}
\newcommand{\lb}{\left(}

\newcommand{\supp}{{\rm supp}}

\newcommand{\bx}{\boldsymbol{x}}
\newcommand{\by}{\boldsymbol{y}}
\newcommand{\boe}{\boldsymbol{E}}
\newcommand{\bg}{\boldsymbol{g}}
\newcommand{\bof}{\boldsymbol{F}}
\newcommand{\bh}{\boldsymbol{H}}
\newcommand{\boh}{\boldsymbol{h}}
\newcommand{\bj}{\boldsymbol{J}}
\newcommand{\bp}{\boldsymbol{p}}
\newcommand{\bee}{\boldsymbol{e}}

\newcommand{\ka}{\kappa}

\newtheorem{thm}{Theorem}[section]

\newtheorem{lem}{Lemma}[section]

\newtheorem{defn}{Definition}[section]

\newtheorem{rem}{Remark}[section]

\definecolor{rot}{rgb}{1,0,0}
\definecolor{hw}{rgb}{0,0,1}

\begin{document}
\renewcommand{\theequation}{\arabic{section}.\arabic{equation}}
\begin{titlepage}
\title{\bf A direct sampling method for inverse time-dependent electromagnetic source problems: 
reconstruction of the radiating time and spatial support}
\author{Fengling Sun
 \thanks{School of Mathematical Sciences, Tianjin Normal University, 300387 Tianjin, P. R. China, ({\tt fenglinsun@tjnu.edu.cn})}
 \and Hongxia Guo \thanks{ (Corresponding author) School of Mathematical Sciences,  and Institute of Mathematics and Interdisciplinary
 Sciences, Tianjin Normal University, 300387 Tianjin, P. R. China. ({\tt hxguo@tjnu.edu.cn })}
}
\date{}
\end{titlepage}
\maketitle


\begin{abstract}

This paper investigates inverse source problems for time-dependent electromagnetic waves governed by Maxwell's equations. After applying the Fourier transform with respect to time, the problem leads to a frequency-domain electromagnetic system with a frequency-dependent source term.
We propose a novel direct sampling method for reconstructing such radiating time and spatial spaces of sources from multi-frequency far-field measurements. By using a pair of multi-frequency data from opposite observation directions, we can obtain the radiating time  of the signal. Based on this, the smallest region between two hyperplanes containing the support of source can be reconstructed using multi frequency data from one observation direction.  The $\Theta$ convex hull of the source support can be reconstructed from multi-frequency data from sparse observation directions. Compared with existing sampling methods that mainly focus on reconstructing
the spatial support, the proposed approach allows for the simultaneous
reconstruction of both spatial and temporal features of the source.Three-dimensional numerical examples are  conducted to validate the effectiveness of the algorithm.

\vspace{.2in} {\bf Keywords: Electromagnetic inverse source problem, direct sampling method, multi-frequency data.
}
\end{abstract}

\section{Introduction}
\subsection{Mathematical formulations}

Consider the time-dependent inverse source problem in electrodynamics, where the source depends on both time and spatial variables. The electromagnetic radiated wave propagation in an isotropic homogeneous dielectric medium in $\R^3$ with constant electric permittivity $\epsilon$ and magnetic permeability $\mu$, described by the electric field ${\boe}$ and magnetic field $\bh$ satisfying the Maxwell equations
\begin{equation} \label{eq:maxwell00}
	\begin{split}
    &\mu \pa_{t}{\bh(\bx,t)}+ \nabla \times \boe(\bx,t) =0, \\
    &\epsilon \pa_{t}{\boe(\bx,t)}-\nabla \times \bh(\bx,t) =-\bj(\bx,t),\quad (\bx,t) \in \R^3\times \R_+,
    \end{split}
\end{equation}
which is supplemented by the homogeneous initial conditions: 
\begin{equation*}
\begin{split}
&\boe(\bx,0)=\pa_t \boe(\bx,0) =0,\\
&\bh(\bx,0)=\pa_t \bh(\bx,0) =0,\quad \bx\in \R^3,
\end{split}
\end{equation*}
where $\bj$ represents the electric current density.
Let $D\subset \R^3$ is a bounded Lipschitz domain such that $\R^3\backslash\overline D$ is connected. We assume that $\bj\in (L^{\infty}((B_R)^3))^3$ was compactly supported within the domain $D$ enclosed in the ball $B_R=\{\bx\in\mathbb{R}^3:|\bx|<R \}$ for some $R>0$. Further assume that the compact support of the source term remains unchanged as time varies. 

In this paper, we consider the following two types of source:

(i)The source is assumed to emit a radiation wave signal at an unknown time $t_0$, which has the following expression with Dirac delta function $\delta$:
\begin{equation*}
    \bj(\bx,t)=\bj(\bx)\delta(t-t_0).
\end{equation*}

(ii) The electric current density is supposed to radiate over a finite time period $[t_{\min},t_{\max}]$ with $t_{\max}>t_{\min}\geq0$, where $t_{\min}$ denotes the initiation time of the electromagnetic wave signal and $t_{\max}$ denotes the termination time of the signal. Further assume that $\bj(\bx, t)\in L^\infty(L^2(B_R)^3\times (t_{\min}, t_{\max}))^3$ satisfies $\supp \bj (\bx, t)=D\subset B_R$ for all $t\in(t_{\min}, t_{\max})$.

Taking the inverse Fourier transform on the wave equation (\ref{eq:maxwell00}) with respect to the time  variable  yields the inhomogeneous Maxwell equation
\begin{equation} \label{eq:maxwell01}
	\begin{split}
		&\mathrm{i}\omega\mu {\widehat{\bh}(\bx,\omega)}-\nabla \times {\widehat{\boe}(\bx,\omega)} =0, \\
		&\mathrm{i}\omega\epsilon {\widehat{\boe}(\bx,\omega)} +\nabla \times {\widehat{\bh}(\bx,\omega)}=-\widehat{\bj}(\bx,\omega),
	\end{split}
\end{equation}
where $\omega$ is angle frequency, $\widehat{\boe}$, $\widehat{\bh}$ and $\widehat{\bj}$  denote the inverse Fourier transforms of the  $\boe$,  $\bh$ and $\bj$ with respect to the time $t$, respectively.
Eliminating the magnetic field $\widehat{\bh}$ from (\ref{eq:maxwell01}), we obtain the Maxwell system for the electric field $\widehat{\boe}$:
\begin{equation*} \label{eq:maxwell02}
 \nabla \times \nabla \times {\widehat{\boe}}(\bx,\omega)-\omega^2\epsilon\mu {\widehat{\boe}}(\bx,\omega)=\mathrm{i}\omega\mu\widehat{\bj}(\bx,\omega):=\mathrm{i}\omega\mu\bof(\bx,\omega)
 \quad (\bx,t) \in \R^3\times \R_+,
\end{equation*}
Furthermore, the scattered fields $\widehat{\boe}$ have to satisfy the Silver-M\"{u}ller radiation condition
\begin{equation*}
	\lim_{|\bx|\to\infty}((\nabla\times\widehat{\boe})\times {\bx}-\mathrm{i}\kappa|\bx|\widehat{\boe})=0,
\end{equation*}
where $\kappa=\omega\sqrt{\epsilon\mu}>0$ is the wavenumber.

Denote by $\GG$ the dyadic Green's tensor associated with Maxwell equations, satisfying 
\begin{equation*}
	\nabla\times\nabla\times\GG(\bx,\by)-\kappa^2\GG(\bx,\by)=\delta(\bx-\by)\mathbb{I}.	
\end{equation*}
where $\I$ is the matrix of identification $3\times3$. Explicitly, the dyadic Green's function $\GG$ can be expressed as
\begin{equation*}
	\GG(\bx,\by)=g(\bx,\by)\mathbb{I}+\dfrac{1}{\kappa^{2}}\nabla\nabla^{\mathsf T}g(\bx,\by),
\end{equation*}
$\nabla \nabla ^\mathsf{T} g(\bx,\by)$ is the Hessian matrix for $g(\bx,\by)$ defined by
\begin{equation*}
\lb\nabla \nabla^\mathsf{T} g(\bx,\by) \rb_{m,n}=\dfrac{\pa^2 g}{\pa x_m \pa x_n},\quad 1\leq m,n\leq 3.
\end{equation*}
We recall $g$ is the fundamental solution of the scalar Helmholtz equation in $\mathbb{R}^3$ and is given by 
\begin{equation*}
	g(\bx,\by)= \dfrac{\mathrm{e}^{\mathrm{i}\kappa|\bx-\by|}}{4\pi|\bx-\by|},\quad \bx\neq \by.
\end{equation*}
Then the scattered field $\widehat{\boe}$ has the representation
\begin{equation*}
	\widehat{\boe}(\bx,\omega)=\mathrm{i}\omega\mu\int_{\mathbb{R}^3}\GG(\bx,\by)\bof(\by,\omega)\mathrm{d}\by,\quad \bx\in\mathbb{R}^3.
\end{equation*}
It is well known that the Silver-M\"{u}ller radiation condition gives rise to the following asymptotic behavior of $\widehat{\boe}$ at infinity:
\begin{equation*}
    \widehat{\boe}(\bx,\omega)=\frac{\mathrm{e}^{\mathrm{i}\omega\sqrt{\epsilon\mu}|\bx|}}{4\pi|\bx|}\{\boe^\infty(\hat \bx,\om) +O(\frac{1}{|\bx|})\},\quad |\bx|\rightarrow \infty,
\end{equation*}
where $\boe^\infty(\hat \bx,\om)$ is known as the electric far field pattern of $\widehat{\boe}$, which is an analytic function on the unit sphere $\mathbb S^2$ with respect to the observation direction $\hat \bx\in \mathbb S^2$. Straightforward calculations shows that the corresponding electric far-field pattern $\boe^\infty(\hat \bx,\om)$ of $\widehat{\boe}$ is given by
\begin{equation}\label{e-infty}
    \boe^\infty(\hat \bx,\om) =\mathrm{i}\om\mu(\mathbb I -\hat\bx\hat\bx^\mathsf{T}) \int_{\R^3} \mathrm{e}^{-\mathrm{i}\om \sqrt{\epsilon\mu}\hat{\bx}\cdot \by } \bof(\by,\omega)\,\mathrm{d}\by,\quad \hat{\bx}\in \s^2.
\end{equation}

Based on different assumptions concerning the source terms, we consider the following two inverse source problems with far-field data measured in sparse observation directions for some $L\in \N$: 
$$\{\boe^\infty(\pm\hat{\bx}_l, \om): \om\in[\om_{\min}, \om_{\max}], l=1,2,\cdots, L\}.$$ 

(i)IP1: The inverse problem is to determine the unknown excitation time $t_0$ of the signal, then reconstruct the position and shape of the support $D$.

(ii)IP2: This inverse problem is to determine the unknown initiation time or termination time of the process, then reconstruct the position and shape of the support $D$.

The aim of this paper is to explore a direct sampling method to tackle the two inverse source problems from a numerical point of view.

\subsection{ Literature review}

This paper investigates the inverse source problem associated with the Maxwell equations. 
Due to its fundamental importance in applications such as antenna synthesis \cite{Ammari2002} 
and biomedical imaging \cite{Balanis2005,Dassios2005}, this problem has attracted sustained attention 
from engineers, physicists and mathematicians. 
However, the inverse source problem at a fixed frequency is severely ill-posed and does not admit a unique solution because of the existence of non-radiating sources 
\cite{Bleistein1977,Albanese2006,Valdivia2012,Li2024}. 
To overcome this difficulty, two main strategies have been widely adopted: incorporating a priori information about the source or exploiting multi-frequency data 
\cite{BaoAmmari2002,Bao2010,BaoLi2015,Liu2015,CIL}. 
In particular, the authors of \cite{Bao2020} established, for the first time, a unified increasing-stability estimate for inverse source problems governed by both the Navier and Maxwell equations using multi-frequency data. 
Further uniqueness and stability results can be found in 
\cite{LiYamamoto2005,HLLZ,Hukian2019,HL2020,Isakov2021,Yuan2023} and the references therein.

When the source term is independent of the wavenumber or frequency, the far-field pattern coincides with the Fourier transform of the spatial source distribution with the Fourier variable $\xi=k\hat{x}$. 
This property enables the development of various numerical reconstruction methods. 
Representative examples include recursive algorithms \cite{BaoLu2015}, Fourier-based reconstruction methods \cite{Wang2018,Wang2019}, and sampling-type techniques 
\cite{GS,Griesmaier2018,JL2019,LiLiu2023,Harris2024}. 
These approaches can generally be classified into iterative and non-iterative methods. 
Iterative methods are capable of recovering quantitative information about the source but usually require repeatedly solving the forward problem and depend on the choice of initial guesses, which results in considerable computational cost. 
To alleviate this difficulty, various non-iterative methods have been proposed. 
Since their indicator functions typically involve only inner products between measured data and suitably chosen test functions, these methods avoid repeated forward simulations and do not require prior information about the target, while maintaining high computational efficiency and good robustness. 
In general, sampling-type methods are qualitative in the sense that they primarily recover the support of the source rather than its amplitude. 
Recently, it has been shown in \cite{Liu2024,Liu2025} that sampling methods can even recover the source function itself provided that sufficiently many observation directions are available, leading to significantly improved reconstruction resolution.

In many practical situations, the source depends on the wavenumber or frequency. 
Such wavenumber-dependent sources naturally arise when applying the Fourier transform to time-dependent source terms. 
In this case, however, the far-field pattern no longer coincides with the Fourier transform of the source function, which significantly complicates the theoretical analysis and the design of reconstruction algorithms. 
For the acoustic case, the authors of \cite{AHLS} showed that multi-frequency data measured from a single observation direction or point can provide useful information about the position and size of the support of the source term. 
Subsequently, factorization-type approaches were extended to wavenumber-dependent source problems with known radiation periods, providing necessary and sufficient conditions for imaging the slab that contains the support of the source term \cite{GGH}. 
To improve practical implementability, a direct sampling method under weaker mathematical assumptions was further proposed in \cite{GHZ}. 
Moreover, uniqueness and increasing-stability results for acoustic inverse source problems with unknown wavenumber dependence were established in \cite{Zhao2024}, together with two reconstruction algorithms. 
More recently, \cite{Ma2025} employed multi-frequency far-field data from two opposite observation directions and applied the factorization method to determine the pulse moment of a moving extended source.

Despite these developments, the study of wavenumber-dependent inverse source problems in the electromagnetic setting remains relatively limited. 
Extending acoustic reconstruction techniques to Maxwell equations is generally non-trivial due to the vectorial nature of electromagnetic fields and the presence of polarization effects. 
Motivated by the ideas in \cite{JL2019,Ma2025}, we preprocess the electromagnetic far-field data by selecting suitable polarization directions, which allows the vectorial Maxwell system to be reduced to a form that can be handled using direct sampling techniques. 
From a physical perspective, this strategy also provides a natural way to analyze time-varying electromagnetic inverse source problems in the frequency domain.

The main contributions of this work are summarized as follows. 
To the best of our knowledge, this is the first direct sampling framework for wavenumber-dependent electromagnetic inverse source problems using  multi-frequency far-field measurements. 
First, we extend the direct sampling framework developed for wavenumber-dependent acoustic inverse source problems to the electromagnetic case governed by the Maxwell equations. 
Second, by utilizing multi-frequency far-field data measured from a pair of symmetric observation directions, we construct indicator functions that allow the identification of the excitation starting and starting/terminal times of the signal. 
Third, based on these temporal reconstructions, we determine the smallest slab region bounded by two hyperplanes that contains the support of the source term. 
Compared with the work published in \cite{GHZ}, where the reconstruction primarily focused on spatial information under a different measurement configuration, the present work employs symmetric multi-frequency observations and provides a unified framework for simultaneously recovering both temporal information and the spatial support of the source.

The remainder of this paper is structured as follows. In Section 2, we first present the inverse Fourier transform of far-field data and then concentrate on the selection of test and indicator functions using multi-frequency far-field data. Additionally, in Section 3, we will extend the direct sampling method to the situations in which the source term radiates signals over a finite time period. Numerical tests will be implemented in Section 4. Finally, some concluding remarks are given in Section 5.


Below we introduce several notations that will be used throughout the paper.

Let $\hat{\bx}\in \mathbb{S}^2$ be an observation direction and 
$D\subset \mathbb{R}^3$ be a bounded domain with a $C^2$ boundary. 
Define
\begin{equation}\label{hatxcdotD}
\hat{\bx}\cdot D
=
\{t\in \mathbb{R}: t=\hat{\bx}\cdot \by \ \text{for some}\ \by\in D\}.
\end{equation}
Obviously, $\hat{\bx}\cdot D$ is an interval of $\mathbb{R}$. 
Moreover, it holds that
\begin{equation}\label{relation}
\inf(-\hat\bx\cdot D)=-\sup(\hat\bx\cdot D),\qquad 
\sup(-\hat\bx\cdot D)=-\inf(\hat\bx\cdot D).
\end{equation}

Throughout the paper, the one-dimensional Fourier transform and its inverse are defined by
\begin{equation*}
\begin{split}
(\mathcal{F}g)(\alpha)
&=
\frac{1}{\sqrt{2\pi}}
\int_{\mathbb{R}} g(\omega)\mathrm{e}^{-\mathrm{i}\omega\alpha}\,\mathrm{d}\omega,
\quad \alpha\in \mathbb{R},\\
(\mathcal{F}^{-1}f)(\omega)
&=
\frac{1}{\sqrt{2\pi}}
\int_{\mathbb{R}} f(\alpha)\mathrm{e}^{\mathrm{i}\omega\alpha}\,\mathrm{d}\alpha,
\quad \omega\in \mathbb{R}.
\end{split}
\end{equation*}

\begin{defn}
Let $\bj\in L^2(\mathbb{R})^3$. 
The \emph{supporting interval} of $\bj$ is defined as the minimal interval 
$I\subset\mathbb{R}$ such that $\bj$ vanishes almost everywhere outside $I$.
\end{defn}

\section{Inverse source problem 1}\label{sec:3}
\subsection{Inverse Fourier transform of far-field pattern}\label{sec:3.1}

In this section, we suppose that the source emits a signal at unknown time $t_0$. Then $\bof(\bx,\omega)$ can be expressed as
\begin{equation}\label{source1}
\bof(\bx,\omega)=\bj(\bx)\mathrm{e}^{\mathrm{i}\omega t_0}.
\end{equation}

\begin{rem}
If $t_0=0$, then the source term becomes
\[
\bof(\bx,\omega)=\bj(\bx),
\]
which is independent of the frequency $\omega$. 
Therefore, the considered model reduces to the classical inverse source problem with frequency-independent sources. 
Hence the present formulation can be regarded as a natural extension of the standard frequency-independent inverse source problem that has been widely studied in the literature.
\end{rem}

Firstly, for any fixed $\hat\bx\in\mathbb{S}^2$, we preprocess the electric far-field field data and denote the preprocessed data as the blackboard bold $\mathbb{E}^{\infty}(\hat \bx,\om)$, or explicitly
\begin{equation*}
    \mathbb{E}^{\infty}(\hat \bx,\om):=\dfrac{1}{\mathrm{i}\om\mu}\bp^{(\hat\bx)}\cdot\boe^{\infty}(\hat \bx,\om),
\end{equation*}
where $\bp^{(\hat\bx)}\in\mathbb{S}^2$  denotes an arbitrary vector orthogonal to the observation direction $\hat\bx$. Below we describe the supporting interval of the inverse Fourier transform of the preprocessed far-field pattern with respect to frequency at a fixed observation direction. Moreover, for simplicity, we denote the wave speed $c=\dfrac{1}{\sqrt{\epsilon\mu}}$.

\begin{lem}\label{21}
	For any fixed $\hat\bx\in\mathbb{S}^2$. The supporting interval of $\mathcal{F}^{-1}(\mathbb{E}^{\infty}(\hat \bx,\om))(t)$ is $H:=(c^{-1}\inf(\hat\bx\cdot D)-t_0,c^{-1}\sup(\hat\bx\cdot D)-t_0)$.
\end{lem}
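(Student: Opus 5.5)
Substituting the source form \eqref{source1} into the far-field formula \eqref{e-infty} and contracting with $\bp^{(\hat\bx)}$ gives an explicit representation of the preprocessed data. Because $\bp^{(\hat\bx)}\perp\hat\bx$, we have $\bp^{(\hat\bx)}\cdot(\mathbb I-\hat\bx\hat\bx^{\mathsf T})\boldsymbol v=\bp^{(\hat\bx)}\cdot\boldsymbol v$, and therefore
\begin{equation*}
\mathbb{E}^{\infty}(\hat\bx,\om)=\mathrm{e}^{\mathrm{i}\om t_0}\int_{\R^3}\mathrm{e}^{-\mathrm{i}\om c^{-1}\hat\bx\cdot\by}\,\bp^{(\hat\bx)}\cdot\bj(\by)\,\mathrm{d}\by .
\end{equation*}
The next step is to reduce this volume integral to a one-dimensional Fourier integral. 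Decompose $\by=s\hat\bx+\by_\perp$ with $s=\hat\bx\cdot\by$, and set
\begin{equation*}
f(s):=\int_{\{\by:\hat\bx\cdot\by=s\}}\bp^{(\hat\bx)}\cdot\bj(\by)\,\mathrm{d}\sigma(\by),
\end{equation*}
which is the Radon-type projection of the scalar function $\bp^{(\hat\bx)}\cdot\bj$ onto the line spanned by $\hat\bx$. Since $\bj\in L^\infty$ has compact support, $f\in L^\infty\cap L^1$ and $f$ vanishes outside $\hat\bx\cdot D$. Substituting $t=c^{-1}s-t_0$ yields
\begin{equation*}
\mathbb{E}^{\infty}(\hat\bx,\om)=\int_{\R}\mathrm{e}^{-\mathrm{i}\om(c^{-1}s-t_0)}f(s)\,\mathrm{d}s=c\int_{\R}\mathrm{e}^{-\mathrm{i}\om t}f\bigl(c(t+t_0)\bigr)\,\mathrm{d}t=\sqrt{2\pi}\,c\,\bigl(\mathcal F h\bigr)(\om),
\end{equation*}
where $h(t)=f(c(t+t_0))$ and $\mathcal F$ here acts in the time variable. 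Applying $\mathcal F^{-1}$ then gives $\mathcal F^{-1}(\mathbb{E}^\infty(\hat\bx,\cdot))(t)=\sqrt{2\pi}\,c\,f(c(t+t_0))$, understood in $L^2$ or distributionally. A sign check against the paper's convention for $\mathcal F$ and $\mathcal F^{-1}$ is needed here. If the signs come out reversed, the function is evaluated at $-t$, and the stated interval $H$ fixes which convention is intended. Under the convention above, $f(c(t+t_0))\neq0$ forces $c(t+t_0)\in\hat\bx\cdot D$, that is, $t\in(c^{-1}\inf(\hat\bx\cdot D)-t_0,\,c^{-1}\sup(\hat\bx\cdot D)-t_0)=H$. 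This proves that the support is contained in $H$.

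\textbf{Main obstacle.} The hard part is minimality: showing that the supporting interval is exactly $H$ and not a proper subinterval. This requires $f$ to be nonzero on sets of positive measure near both endpoints $\inf(\hat\bx\cdot D)$ and $\sup(\hat\bx\cdot D)$. The inclusion does not force this, because cancellations in $\bp^{(\hat\bx)}\cdot\bj$ could make $f$ vanish near an endpoint. I would therefore make explicit the implicit nondegeneracy assumption that the paper's framework uses, as in \cite{GHZ,AHLS}: for instance, that $\bp^{(\hat\bx)}\cdot\bj$ is of one sign (say positive) a.e. near the boundary of $D$, or more simply that $\bp^{(\hat\bx)}\cdot\bj\ge c_0>0$ on $D$. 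Under such a condition, every hyperplane slice $\{\hat\bx\cdot\by=s\}\cap D$ with $s$ in the open interval $\hat\bx\cdot D$ has positive two-dimensional measure, because $D$ is open. Hence $f(s)>0$ for every $s\in\hat\bx\cdot D$. With the minimality hypothesis stated this way, the supporting interval equals $H$, which completes the argument.
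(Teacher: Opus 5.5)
Your proof follows the paper's own route. You contract with $\bp^{(\hat\bx)}$, slice $D$ by the planes $\hat\bx\cdot\by=\mathrm{const}$, recognise $\mathbb{E}^{\infty}(\hat\bx,\cdot)$ as the one-dimensional Fourier transform of $t\mapsto f(c(t+t_0))$ (the paper's $\bg$), and read off the support inclusion. You also keep the factors $c$ and $\sqrt{2\pi}$ that the paper drops. Your sign check is unnecessary: the paper's $\mathcal F$ carries $\mathrm{e}^{-\mathrm{i}\omega\alpha}$, so no reflection $t\mapsto -t$ occurs.

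On minimality you are right, and the paper's proof is not. It only shows $\mathrm{supp}\,\bg\subset H$ and then asserts that the supporting interval equals $H$. Without a nondegeneracy hypothesis this is false; for example, $\bp^{(\hat\bx)}\cdot\bj\equiv 0$ gives identically zero data. Your added assumption is therefore genuinely needed, and it is essentially the condition \eqref{positivity condition} that the paper only introduces after Theorem~\ref{thm1}. One small correction: since $\bj\in L^\infty$ and $f$ is defined through Fubini, you obtain $f(s)>0$ for almost every $s\in\hat\bx\cdot D$ rather than for every $s$, which is still enough for minimality.
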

\begin{proof}
    According to \eqref{e-infty} and \eqref{source1}, for any $\hat\bx\in\mathbb{S}^2$, the inner product of $\bp^{(\hat\bx)}$ and the electric far-field pattern $\boe^{\infty}(\hat \bx,\om)$ can be expressed as
    \begin{align*}
        \bp^{(\hat\bx)}\cdot\boe^{\infty}(\hat \bx,\om)
        & = \bp^{(\hat\bx)} \cdot \mathrm{i}\om\mu(\mathbb I -\hat\bx\hat\bx^\mathsf{T}) \int_{\R^3} \mathrm{e}^{-\mathrm{i}\om c^{-1}(\hat{\bx}\cdot \by -ct_0)} \bj(\by)\,\mathrm{d}\by \\
        & = \mathrm{i}\om\mu \int_{\R^3} \mathrm{e}^{-\mathrm{i}\om c^{-1}(\hat{\bx}\cdot \by -ct_0)} \bp^{(\hat\bx)}\cdot \bj(\by)\,\mathrm{d}\by.
    \end{align*}

To write the above expression as a Fourier transform, we observe that
\begin{equation*}
    \int_{\R^3} \mathrm{e}^{-\mathrm{i}\om c^{-1}(\hat{\bx}\cdot \by -ct_0)} \bp^{(\hat\bx)}\cdot\bj(\by)\,\mathrm{d}\by
    =\int_{\R} \mathrm{e}^{-\mathrm{i}\om\alpha}\int_{\Gamma(c(t_0+\alpha))} \bp^{(\hat\bx)}\cdot \bj(\by)\mathrm{d}s(\by)\mathrm{d}\alpha
\end{equation*}
where $\Gamma(t_0+\alpha)\subset \mathbb{R}$ is defined as
    \be
\Gamma(c(t_0+\alpha)):=\left\lbrace \by\in D:\hat{\bx}\cdot \by=c(t_0+\alpha) \right\rbrace , \quad t_0\in \R. \nonumber
    \en
Note that when $\Gamma(c(t_0+\alpha))=\emptyset$, the aforementioned integral over $\Gamma(c(t_0+\alpha))$ is taken as zero. Thus, $\mathbb{E}^{\infty}(\hat \bx,\om)$ admits the following expression 
\begin{equation}\label{eq:Fouriertransform}
     \mathbb{E}^{\infty}(\hat \bx,\om)
     = \int_{\R} \mathrm{e}^{-\mathrm{i}\om\alpha}\bg(\alpha)\mathrm{d}\alpha
    = (\mathcal{F}\bg)(\om),
\end{equation}
with
\begin{equation}\label{eq:g}
   \bg(\alpha)=\int_{\Gamma(c(t_0+\alpha))} \bp^{(\hat\bx)}\cdot \bj(\by)\mathrm{d}s(\by).
\end{equation}
 
 Since $\Gamma(c(t_0+\alpha))=\emptyset$ for 
    $c(t_0+\alpha)<\inf(\hat\bx\cdot D)$ or
    $c(t_0+\alpha)>\sup(\hat\bx\cdot D)$, it is obvious that 
    \begin{equation*}
        \bg(\alpha)=0 \qquad\qquad\mbox{if} \quad \alpha<c^{-1}\inf(\hat\bx\cdot D)-t_0 \quad\mbox{or}\quad \alpha>c^{-1}\sup(\hat\bx\cdot D)-t_0,
    \end{equation*}
which implies 
    \begin{equation*}\label{suppg}
    \mathrm{supp}\bg(\alpha)\subset(c^{-1}\inf(\hat\bx\cdot D)-t_0,c^{-1}\sup(\hat\bx\cdot D)-t_0).
    \end{equation*}
Let $H:=(c^{-1}\inf(\hat\bx\cdot D)-t_0,c^{-1}\sup(\hat\bx\cdot D)-t_0)$, we deduce that the supporting interval of $\mathcal{F}^{-1}(\mathbb{E}^{\infty}(\hat \bx,\om))(t)$ is $H$. 
\end{proof}

\begin{defn}
    For any fixed unit vector $\hat\bx\in\mathbb{S}^2$, we define the slab $S(\hat\bx;\bj,t_0)$ as the region between two parallel hyperplanes orthogonal to $\hat\bx$, given by
    \begin{equation}
        S(\hat\bx;\bj,t_0):=\{\by\in\mathbb{R}^3:c^{-1}\inf{(\hat\bx\cdot D)}-t_0<\hat\bx\cdot\by<c^{-1}\sup{(\hat\bx\cdot D)}-t_0\}.
    \end{equation}
\end{defn}
    
Building on the preceding analysis, we establish the following uniqueness theorem for the slab.

\begin{thm}\label{thm1}
    For any fixed $\hat\bx\in\mathbb{S}^2$, choose a vector $\bp^{(\hat\bx)}\in\mathbb{S}^2$ such that $\bp^{(\hat\bx)}\cdot\hat\bx=0$. If the set
    \begin{equation}\label{zeroset1}
     \{\alpha\in\mathbb{R}:\Gamma(c(t_0+\alpha))\subset S(\hat\bx;\bj,t_0),\bg(\alpha)=0\}
    \end{equation}
    has Lebesgue measure zero, then the slab $S(\hat\bx;\bj,t_0)$ can be uniquely determined by the data  $\mathbb{E}^{\infty}(\hat \bx,\om)$ for all $\om\in(\om_{\min},\om_{\max})$ at the observation direction $\hat\bx\in\mathbb{S}^2$.
\end{thm}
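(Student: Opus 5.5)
The plan is to show that the data on $(\om_{\min},\om_{\max})$ determine the supporting interval of $\bg$. That interval is exactly the interval $H$ of Lemma~\ref{21}, whose endpoints give the slab $S(\hat\bx;\bj,t_0)$.

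The first step is to extend the data from $(\om_{\min},\om_{\max})$ to all of $\R$ by analytic continuation. By \eqref{eq:Fouriertransform}, $\mathbb{E}^{\infty}(\hat\bx,\om)=(\mathcal F\bg)(\om)$ up to a harmless constant. Here $\bg$ is given by \eqref{eq:g}, and by Lemma~\ref{21} it is supported in the bounded interval $H$. Since $\bj\in L^\infty$ and $D$ is bounded, $\bg\in L^1\cap L^2(\R)$. The Paley--Wiener theorem then says that $\om\mapsto \int_H \mathrm{e}^{-\mathrm{i}\om\alpha}\bg(\alpha)\,\mathrm d\alpha$ extends to an entire function of exponential type. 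Its values on the nonempty open interval $(\om_{\min},\om_{\max})$ therefore fix it on all of $\R$, by the identity theorem. Consequently $\mathcal F\bg$, and hence $\bg=\mathcal F^{-1}\mathcal F\bg$ in $L^2(\R)$, is uniquely determined by the measured data.

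The second step identifies the supporting interval of $\bg$ as the whole of $H$, not merely a subset of it. Lemma~\ref{21} only gives $\supp\bg\subset \overline H$. The hypothesis that the set \eqref{zeroset1} has Lebesgue measure zero says that $\bg\neq0$ almost everywhere on the $\alpha$-range where the cross-section $\Gamma(c(t_0+\alpha))$ is nonempty, which is precisely $H$. So $\bg$ cannot vanish on any subinterval of positive measure near either endpoint of $H$. It follows that the minimal interval outside which $\bg$ vanishes a.e. is exactly $H$. From the now-known $\bg$ we therefore read off $\inf H=c^{-1}\inf(\hat\bx\cdot D)-t_0$ and $\sup H=c^{-1}\sup(\hat\bx\cdot D)-t_0$. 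These two numbers define $S(\hat\bx;\bj,t_0)$ through its definition, which gives uniqueness. For the uniqueness statement itself, I would argue by contradiction: if two sources gave the same data, their functions $\bg$ would coincide a.e., so their supporting intervals, and hence their slabs, would coincide.

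The main obstacle is to formulate the second step rigorously, because the hypothesis on \eqref{zeroset1} is stated loosely, with the condition $\Gamma\subset S$ in place of $\Gamma\neq\emptyset$. I would interpret it as ``$\bg(\alpha)\neq0$ for a.e.\ $\alpha\in H$'' and check that this is exactly what is needed for the supporting interval to be the full interval $H$. A secondary point is the analytic continuation, which requires $\bg$ to be compactly supported and integrable. Both follow directly from Lemma~\ref{21} and the boundedness of $\bj$ on the bounded set $D$, together with the Fubini/coarea rewriting already used in that lemma.
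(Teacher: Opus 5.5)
Your proposal is correct and follows essentially the same route as the paper. The preprocessed data are the Fourier transform of the compactly supported function $\bg$. Analyticity in $\om$ lets the data on $(\om_{\min},\om_{\max})$ determine $\bg$. The measure-zero hypothesis then forces the set where $\bg\neq0$ to fill out $H$, which recovers the slab $S(\hat\bx;\bj,t_0)$.

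Your version is a more careful rendering of the paper's closure-of-cross-sections identity in two respects: you read off the endpoints of the supporting interval directly, and you interpret the hypothesis as $\bg\neq0$ a.e.\ on $H$. That interpretation is needed, since taken literally the condition $\Gamma(c(t_0+\alpha))\subset S(\hat\bx;\bj,t_0)$ holds trivially whenever $\Gamma$ is empty.
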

\begin{proof}
    From the representation \eqref{eq:Fouriertransform}, we deduce that the the preprocessed data $\mathbb{E}^{\infty}(\hat \bx,\om)$ is just the Fourier transform of $\bg(\alpha)$. 
    This implies that $\bg(\alpha)$ can be uniquely determined by $\mathbb{E}^{\infty}(\hat \bx,\om)$, where $\om\in(\om_{\min},\om_{\max})$. Note that the electric far field pattern is analytic in angle frequency $\om$. Under the assumption that the Lebesgue measure of  \eqref{zeroset1} is zero, we can conclude that 
    \begin{equation*}
    S(\hat\bx;\bj,t_0)=\overline{\bigcup_{\alpha\in\mathbb{R}}\{\Gamma(c(t_0+\alpha))|\bg(\alpha)\neq0}\}.
    \end{equation*}
    which implies that the slab $S(\hat\bx;\bj,t_0)$ is uniquely determined by $\bg(\alpha)$, also by the data $\mathbb{E}^{\infty}(\hat \bx,\om)$. 
\end{proof}

\begin{rem}
   The set in \eqref{zeroset1} has Lebesgue measure zero if the real part of a complex multiple of the source projection function $\bp^{(\hat\bx)}\cdot \bj(\by)$ is bounded away from zero on their support, i.e., we assume that for some $\alpha\in\mathbb{R}$ and $c>0$, $\bp^{(\hat\bx)}\cdot \bj(\by) \in L^{\infty}(D)$ satisfies 
   \begin{equation}\label{positivity condition}
       \real(\mathrm{e}^{\mathrm{i}\alpha}\bp^{(\hat\bx)}\cdot \bj(\by)) \geq c_0\quad a.e.  \quad\by\in D.
   \end{equation}
\end{rem}

\subsection{Indicator and test functions}\label{sec:3.2}

From Theorem \ref{thm1}, one can extract information on the interval $(c\inf(\hat\bx\cdot D)-t_0,c\sup(\hat\bx\cdot D)-t_0)$ by taking the inverse Fourier transform of the data $\mathbb{E}^{\infty}(\hat \bx,\om)$. In this section, we first design a proper indicator function and employ the far-field data collected from two opposite observation directions to determine the excitation time $t_0$. 

For any sampling point $\by\in\mathbb{R}^3$ and a given time parameter 
$\eta\ge0$, we define the test function $\phi^{(\hat{\bx})}_{\eta}(\by,\omega)$ by
\begin{equation*}
\phi^{(\hat{\bx})}_{\eta}(\by,\omega)
:=
\mathrm{e}^{-\mathrm{i}\omega(c^{-1}\hat{\bx}\cdot \by-\eta)} .
\end{equation*}
Here $\by$ represents the spatial sampling point, while $\eta$ denotes a time sampling parameter. 
In the proposed sampling scheme, the parameter $\by$ is used to probe the spatial location of the source, whereas $\eta$ serves to extract temporal information of the emitted signal.
Then we introduce the following auxiliary indicator function
\begin{equation}\label{Indicator}
    \mathcal I^{(\hat{\bx})}_{\eta}(\by) 
    =\int_{\R}\mathbb{E}^{\infty}(\hat \bx,\om)
    \overline{\phi_{\eta}^{(\hat{\bx})}(\by,\omega)} \, d{\omega}.
\end{equation}

To characterize the spatial behavior of the indicator function, 
we introduce several slab regions associated with the projection of the domain $D$ along the observation direction $\hat{\bx}$. 
Recall the definition of $\hat{\bx}\cdot D$ in \eqref{hatxcdotD} and the relation in \eqref{relation}. 
Based on these quantities, we define the following unbounded parallel slabs
\begin{equation}\label{stripK_D}
    \begin{split}
        &K_{D}^{(\hat\bx)}
        :=\{\by\in\mathbb{R}^3: 
        \inf(\hat\bx\cdot D)<\hat\bx\cdot\by< \sup(\hat\bx\cdot D)\},\\
        &K_{D,\eta}^{(\hat\bx)}
        :=\{\by\in\mathbb{R}^3: 
        \inf(\hat\bx\cdot D)-c(t_0-\eta)
        <\hat\bx\cdot\by<
        \sup(\hat\bx\cdot D)-c(t_0-\eta)\},\\
        &K_{D,\eta}^{(-\hat\bx)}
        :=\{\by\in\mathbb{R}^3: 
        \inf(\hat\bx\cdot D)+c(t_0-\eta)
        <\hat\bx\cdot\by<
        \sup(\hat\bx\cdot D)+c(t_0-\eta)\}.
    \end{split}    
\end{equation}
These slabs are perpendicular to the observation direction $\hat\bx$. 
Obviously, $K_{D}^{(\hat\bx)}$ is the smallest slab containing $\overline{D}$. 
Moreover, the slabs $K_{D,\eta}^{(\hat\bx)}$ and $K_{D,\eta}^{(-\hat\bx)}$ can be viewed as translations of $K_{D}^{(\hat\bx)}$ along or opposite to the direction $\hat\bx$ by the distance $c(t_0-\eta)$, respectively.

The following theorem reveals the fundamental property of the indicator function and explains how the support information of the source is encoded in the slab $K_{D,\eta}^{(\hat\bx)}$.

\begin{thm}\label{Th3-2}
    Let $\hat{\bx}\in\mathbb{S}^2$ be fixed. Then the indicator function satisfies
    \begin{equation}\label{IndicatorI1}
        \mathcal I^{(\hat{\bx})}_{\eta}(\by)
        =\left
        \{\begin{array}{lll}
        \mbox{a finite positive number}, 
        &&\mbox{for} \quad \by\in K_{D,\eta}^{(\hat\bx)},\\[4pt]
        0,
        &&\mbox{for}\quad \by\notin \overline{K_{D,\eta}^{(\hat\bx)}}.
       \end{array}\right.
    \end{equation}
\end{thm}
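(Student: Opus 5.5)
The plan is to use the Fourier representation \eqref{eq:Fouriertransform} from the proof of Lemma \ref{21} and rewrite the indicator as a delta-sampling of the function $\bg$. By \eqref{eq:Fouriertransform} we have $\mathbb{E}^{\infty}(\hat\bx,\om)=\int_{\R}\mathrm{e}^{-\mathrm{i}\om\alpha}\bg(\alpha)\,\mathrm{d}\alpha$. The conjugated test function is
\[
\overline{\phi^{(\hat\bx)}_\eta(\by,\om)}=\mathrm{e}^{\mathrm{i}\om(c^{-1}\hat\bx\cdot\by-\eta)}.
\]
Hence the indicator \eqref{Indicator} becomes
\[
\mathcal I^{(\hat\bx)}_\eta(\by)=\int_{\R}\mathrm{e}^{\mathrm{i}\om(c^{-1}\hat\bx\cdot\by-\eta)}\,\mathbb{E}^{\infty}(\hat\bx,\om)\,\mathrm{d}\om .
\]
This is, up to the factor $\sqrt{2\pi}$ coming from the normalisation of $\mathcal F^{-1}$, the inverse Fourier transform of $\mathbb{E}^{\infty}(\hat\bx,\cdot)$ evaluated at the time point $\tau(\by):=c^{-1}\hat\bx\cdot\by-\eta$. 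Applying Fourier inversion to the compactly supported bounded function $\bg$ (with the constant bookkeeping between the unnormalised integral in \eqref{eq:Fouriertransform} and $\mathcal F$), I expect
\[
\mathcal I^{(\hat\bx)}_\eta(\by)=2\pi\,\bg\bigl(c^{-1}\hat\bx\cdot\by-\eta\bigr),
\]
valid at Lebesgue points, or in the $L^2$/distributional sense if the frequency integral is understood as a limit.

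The key step is then to read off the support. By Lemma \ref{21}, $\bg(\tau)=0$ whenever $\tau\notin[c^{-1}\inf(\hat\bx\cdot D)-t_0,\;c^{-1}\sup(\hat\bx\cdot D)-t_0]$. Substituting $\tau=c^{-1}\hat\bx\cdot\by-\eta$ and multiplying by $c$, this condition becomes
\[
\inf(\hat\bx\cdot D)-c(t_0-\eta)\le \hat\bx\cdot\by\le \sup(\hat\bx\cdot D)-c(t_0-\eta),
\]
which is exactly $\by\in\overline{K^{(\hat\bx)}_{D,\eta}}$ from \eqref{stripK_D}. Therefore $\mathcal I^{(\hat\bx)}_\eta(\by)=0$ for $\by\notin\overline{K^{(\hat\bx)}_{D,\eta}}$. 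This gives the second line of \eqref{IndicatorI1}.

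For $\by\in K^{(\hat\bx)}_{D,\eta}$, finiteness follows from the formula for $\bg$ in \eqref{eq:g}. Since $\bj\in L^\infty$ and $D$ is bounded, the cross-section $\Gamma$ has uniformly bounded surface measure, so $|\bg|\le \|\bp^{(\hat\bx)}\cdot\bj\|_\infty\sup_\alpha|\Gamma(c(t_0+\alpha))|<\infty$.

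Positivity is the main obstacle. In general $\bg$ is complex-valued, so ``positive'' has to be read either as $|\mathcal I|$ or as a statement under an extra assumption. I would invoke the positivity condition \eqref{positivity condition} from the preceding Remark: after multiplying the data by the fixed phase $\mathrm{e}^{\mathrm{i}\alpha}$, one gets $\real\bigl(\mathrm{e}^{\mathrm{i}\alpha}\bg(\tau)\bigr)\ge c_0|\Gamma(c(t_0+\tau))|$. The cross-section $\Gamma(c(t_0+\tau))$ has positive surface measure for $\tau$ in the open interval $H$, because $D$ is a domain and the hyperplane meets its interior. Hence the (phase-rotated) indicator, or its modulus, is strictly positive on $K^{(\hat\bx)}_{D,\eta}$.

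A secondary technical point is that the integral over all $\om\in\R$ is only conditionally convergent. I would justify it by treating the identity above in the sense of Fourier inversion for $\bg\in L^1\cap L^2$, noting that $\mathbb{E}^{\infty}(\hat\bx,\cdot)$ is defined for all real $\om$ by analytic extension as in Theorem \ref{thm1}.
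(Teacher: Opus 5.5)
Your proposal is correct and follows the paper's own route. The paper also writes $\phi^{(\hat\bx)}_\eta(\by,\cdot)$ as the Fourier transform of $\delta(\alpha-c^{-1}\hat\bx\cdot\by+\eta)$, collapses the indicator via Parseval to $\bg(c^{-1}\hat\bx\cdot\by-\eta)$ (dropping the constant $2\pi$ you keep), and reads off $K_{D,\eta}^{(\hat\bx)}$ from $\supp\bg\subset H$; the one real difference is in your favour, since the paper infers $\bg\neq 0$ on the open slab from the support inclusion alone, which only gives the vanishing half of \eqref{IndicatorI1}, whereas your use of \eqref{positivity condition}, the positive area of the cross-sections of $D$, and the modulus or phase-rotated reading of ``positive'' supplies what the first line needs.
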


\begin{proof}
    The test function $\phi^{(\hat{\bx})}_{\eta}(\by,\om)$ can be rephrased as 
    \begin{equation*}\label{Fourierdelta}
        \phi^{(\hat{\bx})}_{\eta}(\by,\om):=\int_{\R} \mathrm{e}^{-\mathrm{i}\om\alpha}\delta[\alpha-(c^{-1}\hat\bx\cdot\by-\eta)] \mathrm{d}\alpha
        = [\mathcal{F}(\delta(\alpha-c^{-1}\hat\bx\cdot\by+\eta))](\om).
    \end{equation*}
Applying the Parserval’s identity and properties of Fourier transform, we deduce from \eqref{eq:Fouriertransform} and \eqref{Fourierdelta} that
\begin{align*}
    \mathcal I^{(\hat{\bx})}_{\eta}(\by,\om) &=\int_{\R}\mathbb{E}^{\infty}(\hat \bx,\om)\overline{\phi^{(\hat{\bx})}_{\eta}(\by,\om)} \mathrm{d}{\om} \\
    &=\int_{\R}[\mathcal{F}\bg(\alpha)](\om)\overline{[\mathcal{F}(\delta(\alpha-c^{-1}\hat\bx\cdot\by+\eta))](\om)}\mathrm{d}\omega \\
    &=\int_{\R}\bg(\alpha)\delta(\alpha-c^{-1}\hat\bx\cdot\by+\eta)\mathrm{d}\alpha \\
    &=\bg(c^{-1}\hat\bx\cdot\by-\eta),
\end{align*}
where $\bg(\alpha)$ is defined by \eqref{eq:g}. From \eqref{suppg}, we know that $\mathrm{supp}\bg(\alpha)\subset(c^{-1}\inf(\hat\bx\cdot D)-t_0,c^{-1}\sup(\hat\bx\cdot D)-t_0)$. Therefore $\bg(c^{-1}\hat\bx\cdot\by-\eta)\neq 0$ for all $\by\in\mathbb{R}^3$ such that $c^{-1}\hat\bx\cdot\by-\eta\in(c^{-1}\inf(\hat\bx\cdot D)-t_0,c^{-1}\sup(\hat\bx\cdot D)-t_0)$, that is $\by\in K_{D,\eta}^{(\hat\bx)}$. On the other hand, it is also obvious that $\bg(c^{-1}\hat\bx\cdot\by-\eta)= 0$ for all $\by\notin K_{D,\eta}^{(\hat\bx)}$. 
which proves \eqref{IndicatorI1}.
\end{proof}

Now we turn to the recovery of the unknown excitation time $t_0$. 
To this end, we combine the indicator functions corresponding to a pair of opposite observation directions. 
Specifically, we introduce the following auxiliary indicator function
\begin{equation*}
    \mathcal W_{\eta}^{(\hat{\bx})}(\by):=
    \left[
    \frac{1}{\mathcal I_{\eta}^{(\hat{\bx})}(\by)}+
    \frac{1}{\mathcal I_{\eta}^{(-\hat{\bx})}(\by)}
    \right]^{-1}
    =
    \frac{\mathcal I_{\eta}^{(\hat{\bx})}(\by)\; \mathcal I_{\eta}^{(-\hat{\bx})}(\by)}
    {\mathcal I_{\eta}^{(\hat{\bx})}(\by)+\mathcal I_{\eta}^{(-\hat{\bx})}(\by)}.
\end{equation*}

This construction allows us to extract the common spatial region determined by the two opposite observation directions. 
As a consequence, the following result follows directly from Theorem~\ref{Th3-2} together with the definition of the slabs in \eqref{stripK_D}.

\begin{thm}\label{IndicatorW}
    Let $\hat{\bx}\in\mathbb{S}^2$ be fixed. Then
    \begin{equation}\label{IndicatorW1}
        \mathcal W_{\eta}^{(\hat{\bx})}(\by)=\left
        \{\begin{array}{lll}
        \mbox{a finite positive number}, 
        &&\mbox{for} \quad \by\in K_{D,\eta}^{(\hat\bx)}\cap K_{D,\eta}^{(-\hat\bx)},\\[4pt]
        0, 
        &&\mbox{for}\quad \by\notin \overline{K_{D,\eta}^{(\hat\bx)}\cap K_{D,\eta}^{(-\hat\bx)}}.
       \end{array}\right.
    \end{equation}
\end{thm}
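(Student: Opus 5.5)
The plan is to apply Theorem~\ref{Th3-2} twice, once with $\hat\bx$ and once with $-\hat\bx$, and then read off the behaviour of the harmonic-type combination $\mathcal W_{\eta}^{(\hat\bx)}$ from that of its two factors.

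\textbf{Step 1: identify the slab for $-\hat\bx$.} Applying Theorem~\ref{Th3-2} to the direction $-\hat\bx$ shows that $\mathcal I_{\eta}^{(-\hat\bx)}(\by)$ is a finite positive number on the slab
\[
\{\by:\ \inf(-\hat\bx\cdot D)-c(t_0-\eta)<-\hat\bx\cdot\by<\sup(-\hat\bx\cdot D)-c(t_0-\eta)\},
\]
and vanishes outside its closure. Using \eqref{relation}, we substitute $\inf(-\hat\bx\cdot D)=-\sup(\hat\bx\cdot D)$ and $\sup(-\hat\bx\cdot D)=-\inf(\hat\bx\cdot D)$. Multiplying the inequalities by $-1$ then gives
\[
\inf(\hat\bx\cdot D)+c(t_0-\eta)<\hat\bx\cdot\by<\sup(\hat\bx\cdot D)+c(t_0-\eta).
\]
This is exactly $K_{D,\eta}^{(-\hat\bx)}$ in \eqref{stripK_D}. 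This bookkeeping is the only place any real care is needed.

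\textbf{Step 2: the intersection.} For $\by\in K_{D,\eta}^{(\hat\bx)}\cap K_{D,\eta}^{(-\hat\bx)}$, both $\mathcal I_{\eta}^{(\pm\hat\bx)}(\by)$ are finite and positive. Hence $\mathcal W_{\eta}^{(\hat\bx)}(\by)=\mathcal I^{(\hat\bx)}\mathcal I^{(-\hat\bx)}/(\mathcal I^{(\hat\bx)}+\mathcal I^{(-\hat\bx)})$ is a quotient of positive finite numbers with nonzero denominator, so it is finite and positive. It is in fact bounded by $\min(\mathcal I^{(\hat\bx)},\mathcal I^{(-\hat\bx)})$.

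\textbf{Step 3: outside the closure.} If $\by\notin\overline{K_{D,\eta}^{(\hat\bx)}\cap K_{D,\eta}^{(-\hat\bx)}}$, I will show that $\by$ lies outside the closure of at least one slab. Two cases arise:
\begin{itemize}
\item If $\by$ lies outside the closure of one slab, that slab's indicator vanishes by Theorem~\ref{Th3-2}.
\item Otherwise $\by$ lies in the closures of both slabs, but the two slabs do not overlap near $\by$. This happens, for instance, when the slabs are disjoint or merely touch. In that situation $\by$ lies on the boundary of one slab, and I would handle it by noting that $\bg$ vanishes at the endpoints of its supporting interval, which is open.
\end{itemize}

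\textbf{Step 4: interpreting the formula when a factor vanishes.} The expression $\mathcal W$ is defined through reciprocals, so the formula is formally $0/0$ when a factor is zero. I would interpret $\mathcal W=[1/\mathcal I^{(\hat\bx)}+1/\mathcal I^{(-\hat\bx)}]^{-1}$ with the conventions $1/0=\infty$ and $\infty^{-1}=0$; equivalently, $\mathcal W=0$ whenever either factor vanishes. With this convention, a vanishing factor in Step 3 forces $\mathcal W_{\eta}^{(\hat\bx)}(\by)=0$.

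The main obstacle is that Theorem~\ref{Th3-2} asserts positivity of $\mathcal I$, which really depends on the nonvanishing assumption of Theorem~\ref{thm1}. I will simply take Theorem~\ref{Th3-2} as given.
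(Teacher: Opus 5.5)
Your proposal is correct and follows the same route as the paper: apply Theorem~\ref{Th3-2} in the two directions $\pm\hat\bx$, then read off the behaviour of the harmonic-mean combination $\mathcal W_{\eta}^{(\hat\bx)}$. Your extra care only tightens that argument. You identify the $-\hat\bx$ slab via \eqref{relation} and fix the convention $1/0=\infty$. You also treat the case where the two slabs merely touch, which the paper's ``without loss of generality, $\by\notin\overline{K_{D,\eta}^{(\hat\bx)}}$'' silently skips; there $\bg$ vanishes at the endpoint because the level set $\Gamma$ is empty for open $D$.
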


\begin{proof}
    From \eqref{IndicatorI1} and \eqref{IndicatorW1}, 
    it is obvious that for $\by\in K_{D,\eta}^{(\hat\bx)}\cap K_{D,\eta}^{(-\hat\bx)}$, both $\mathcal I^{(\hat{\bx})}_{\eta}(\by,\om)$ and $\mathcal I^{(-\hat{\bx})}_{\eta}(\by,\om)$ are finite positive number. Implying that $0<\mathcal W_{\eta}^{(\hat{\bx})}(\by,\om) <\infty$.

    On the other hand, for $\by\notin \overline{K_{D,\eta}^{(\hat\bx)}\cap K_{D,\eta}^{(-\hat\bx)}}$, without loss of generality, one can assume that $\by\notin \overline{K_{D,\eta}^{(\hat\bx)}}$. In this case, $\mathcal I^{(\hat{\bx})}_{\eta}(\by,\om)=0$ and then $\mathcal W_{\eta}^{(\hat{\bx})}(\by,\om)=0$. 
\end{proof}

\begin{rem}
The construction of $\mathcal W_{\eta}^{(\hat{\bx})}$ acts as a filtering mechanism 
that extracts the intersection of the supports of the two indicator functions 
$\mathcal I_{\eta}^{(\hat{\bx})}$ and $\mathcal I_{\eta}^{(-\hat{\bx})}$. 
Consequently, the resulting indicator localizes the spatial region determined 
simultaneously by the two opposite observation directions.
\end{rem}

When $\eta$ deviates from the true excitation time, the two reconstructed slabs remain separated and the corresponding indicator function vanishes over most of the sampling region. As $\eta$ approaches the true excitation moment, the two slabs gradually overlap and the intersection region enlarges. The overlap reaches its maximal extent when $\eta$ is closest to the true excitation time.
This geometric transition provides a stable numerical mechanism for identifying the unknown excitation time. To determine $t_0$, we consider the one-dimensional function
\[
\eta \mapsto \max_{\by\in B_{\mathbb{R}}}\mathcal W_{\eta}^{(\hat{\bx})}(\by),
\]
whose behavior enables the recovery of the excitation time.

\begin{thm}\label{timeindicator}
Define the function
\[
\mathcal T_{t_0}^{(\hat{\bx})}(\eta)
=\max_{\by\in B_{\mathbb{R}}}\mathcal W_{\eta}^{(\hat{\bx})}(\by).
\]
Then the following property holds
\begin{equation}
\mathcal T_{t_0}^{(\hat{\bx})}(\eta)=
\left
\{
\begin{array}{lll}
\mbox{positive} && \mbox{for}\quad \eta\in [\eta_1,\eta_2],\\
0 && \mbox{for}\quad \eta\notin [\eta_1,\eta_2].
\end{array}
\right.
\end{equation}
Consequently, the unknown excitation time is given by
\[
t_0=\dfrac{\eta_1+\eta_2}{2}.
\]
\end{thm}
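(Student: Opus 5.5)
We reduce everything to a one-dimensional overlap condition between the two slabs in \eqref{stripK_D}, using Theorem~\ref{IndicatorW}. Write $a:=\inf(\hat\bx\cdot D)$ and $b:=\sup(\hat\bx\cdot D)$, so that $a<b$ and $[a,b]\subset[-R,R]$ because $D\subset B_R$. Set $s:=c(t_0-\eta)$. Then
\[
K_{D,\eta}^{(\hat\bx)}=\{a-s<\hat\bx\cdot\by<b-s\},\qquad K_{D,\eta}^{(-\hat\bx)}=\{a+s<\hat\bx\cdot\by<b+s\}.
\]
Their intersection is the slab $\{\max(a-s,a+s)<\hat\bx\cdot\by<\min(b-s,b+s)\}$, that is,
\[
\{a+|s|<\hat\bx\cdot\by<b-|s|\}.
\]
This set is nonempty exactly when $2|s|<b-a$, which means $|\eta-t_0|<(b-a)/(2c)$. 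We therefore define
\[
\eta_1:=t_0-\frac{b-a}{2c},\qquad \eta_2:=t_0+\frac{b-a}{2c}.
\]
The identity $t_0=(\eta_1+\eta_2)/2$ follows at once from this symmetric form. The substantive task is to show that the positivity set of $\mathcal T_{t_0}^{(\hat\bx)}$ is exactly this interval, so that $\eta_1,\eta_2$ can be read off from the data.

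For the vanishing part, take $\eta\notin[\eta_1,\eta_2]$. Then $2|s|>b-a$, so $\overline{K_{D,\eta}^{(\hat\bx)}\cap K_{D,\eta}^{(-\hat\bx)}}$ is empty. By Theorem~\ref{IndicatorW}, $\mathcal W_\eta^{(\hat\bx)}(\by)=0$ for every $\by$, and hence $\mathcal T_{t_0}^{(\hat\bx)}(\eta)=0$.

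For the positive part, take $\eta\in(\eta_1,\eta_2)$. The intersection slab has midplane $\hat\bx\cdot\by=(a+b)/2$, and $|(a+b)/2|<R$, so the slab meets $B_R$ in a nonempty open set. At any point of that set, Theorem~\ref{IndicatorW} gives $\mathcal W_\eta^{(\hat\bx)}>0$, and therefore the maximum over $B_R$ is positive.

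The main obstacle is the two endpoints $\eta=\eta_{1,2}$, together with the question of whether the maximum is attained. At the endpoints the intersection degenerates to the single hyperplane $\hat\bx\cdot\by=(a+b)/2$. Theorem~\ref{IndicatorW} says nothing on the boundary, and there $\mathcal I^{(\pm\hat\bx)}_\eta(\by)=\bg(\cdot)$ is evaluated exactly at an endpoint of $\supp\bg$, where it may vanish. I would handle this in one of two ways. The first option is to read the closed interval in the statement as the closure of the positivity set and interpret $\max$ as $\sup$. The second option is to invoke the positivity condition \eqref{positivity condition}, which makes the boundary-trace integral of $\bp^{(\hat\bx)}\cdot\bj$ on the extreme hyperplane well defined; in that case one sets $\eta_1,\eta_2$ to be the infimum and supremum of $\{\eta:\mathcal T_{t_0}^{(\hat\bx)}(\eta)>0\}$. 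The second option avoids deciding the endpoint values: the interior analysis shows this set contains $(\eta_1,\eta_2)$ and lies in $[\eta_1,\eta_2]$, so its infimum and supremum are exactly the $\eta_1,\eta_2$ defined above. The midpoint formula holds either way.

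Finally, I would remark that this argument uses the positivity in Theorem~\ref{Th3-2}. That positivity is really the nonvanishing of $\bg$ inside its support, which is guaranteed by the measure-zero assumption of Theorem~\ref{thm1} or by \eqref{positivity condition}.
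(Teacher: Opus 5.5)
Your proposal is correct and follows the paper's route. Like you, the paper identifies $\eta_1,\eta_2$ as the moments at which the translated slabs $K_{D,\eta}^{(\hat\bx)}$ and $K_{D,\eta}^{(-\hat\bx)}$ begin and cease to overlap, obtains $\eta_{1,2}=t_0\mp\frac{1}{2c}\big[\sup(\hat\bx\cdot D)-\inf(\hat\bx\cdot D)\big]$ by direct calculation, and averages. The paper's argument omits the details you add: the explicit intersection $\{a+|s|<\hat\bx\cdot\by<b-|s|\}$, the check that it meets $B_R$, and the endpoint caveat that at $\eta=\eta_{1,2}$ the two open slabs only touch, so positivity on the closed interval is not justified and your inf/sup reading of $\eta_1,\eta_2$ is the right fix.
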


\begin{proof}
Combining the definition of the slabs in \eqref{stripK_D} with Theorem \ref{IndicatorW}, we observe that $\eta_1$ and $\eta_2$ correspond to the critical moments when the slabs $K_{D,\eta}^{(\hat\bx)}$ and $K_{D,\eta}^{(-\hat\bx)}$ start to overlap and become completely separated, respectively. A direct calculation yields
\begin{equation*}
\begin{split}
\eta_1 &= t_0-\dfrac{1}{2c}
\big[\sup(\hat\bx\cdot D)-\inf(\hat\bx\cdot D)\big],\\
\eta_2 &= t_0-\dfrac{1}{2c}
\big[\inf(\hat\bx\cdot D)-\sup(\hat\bx\cdot D)\big].
\end{split}
\end{equation*}
Taking the average of $\eta_1$ and $\eta_2$ immediately gives
\[
t_0=\dfrac{\eta_1+\eta_2}{2},
\]
which completes the proof.
\end{proof}

\begin{rem}
The relation $t_0=(\eta_1+\eta_2)/2$ admits a clear geometric interpretation. 
The slabs $K_{D,\eta}^{(\hat\bx)}$ and $K_{D,\eta}^{(-\hat\bx)}$ can be viewed as translations of the smallest slab $K_D^{(\hat\bx)}$ along the directions $\hat\bx$ and $-\hat\bx$, respectively, with a shift distance proportional to $c^{-1}(t_0-\eta)$. 
When $\eta$ varies, these two slabs move in opposite directions and their intersection changes accordingly. 
The parameters $\eta_1$ and $\eta_2$ correspond to the two critical moments when the slabs begin to intersect and finally separate. 
Due to the symmetric propagation of electromagnetic waves along opposite directions, these two moments are located symmetrically around the true excitation time $t_0$. 
Therefore, the midpoint of the interval $[\eta_1,\eta_2]$ naturally yields the excitation time.
\end{rem}

Once the excitation time $t_0$ has been determined according to Theorem \ref{timeindicator}, the smallest slab $K_{D}^{(\hat\bx)}$ containing the source support can be reconstructed by the following indicator function using the multi-frequency electric far-field pattern measured in a single observation direction
\begin{equation}\label{IndicatorIII4.2}
\mathcal I^{(\hat{\bx})}(\by)
=\int_{\R}
\mathbb{E}^{\infty}(\hat \bx,\omega)
\overline{\phi^{(\hat{\bx})}(\by,\omega)}
\,\mathrm{d}\omega,
\end{equation}
where
\[
\phi^{(\hat{\bx})}(\by,\omega)
:=\mathrm{e}^{-\mathrm{i}\omega(c^{-1}\hat\bx\cdot \by-t_0)},
\qquad \by\in\mathbb{R}^3.
\]

Clearly, the indicator function $\mathcal I^{(\hat{\bx})}(\by)$ satisfies
\begin{equation}\label{I_1}
\mathcal I^{(\hat{\bx})}(\by)=
\left
\{
\begin{array}{lll}
\mbox{a finite positive number}
&& \mbox{for}\quad \by\in K_{D}^{(\hat\bx)},\\
0
&& \mbox{for}\quad \by\notin \overline{K_{D}^{(\hat\bx)}}.
\end{array}
\right.
\end{equation}

When multiple but sparse observation directions $\{\hat\bx_l\}_{l=1}^{L}$ are available, we denote by $\mathcal I^{(\hat{\bx_l})}(\by)$ the indicator function corresponding to the $l$-th observation direction. 
To combine the information from different directions, we introduce the following composite indicator function
\begin{equation}\label{I_1^{-1}}
\mathcal I(\by)
=\left[
\sum_{l=1}^{L}
\dfrac{1}{\mathcal I^{(\hat{\bx}_l)}(\by)}
\right]^{-1}.
\end{equation}

When $\eta$ deviates from the true excitation moment,
the reconstructed slabs remain separated and the indicator function
vanishes in most of the sampling region.
As $\eta$ approaches the correct excitation time,
the intersection region gradually enlarges and reaches its maximal extent.
This transition provides a stable numerical mechanism
for identifying the unknown excitation time. By plotting the one-dimensional function $\eta\to\max\limits_{\by\in B_{\mathbb{R}}}\mathcal W_{\eta}^{(\hat{\bx})}(\by,\om)$, the unknown excitation time $t_0$ can be recovered.

\begin{thm}\label{timeindicator}
    The function $\mathcal T_{t_0}^{(\hat{\bx})}(\eta)=\max\limits_{\by\in B_{\mathbb{R}}}\mathcal W_{\eta}^{(\hat{\bx})}(\by,\om)$ fulfills that
    \begin{equation}
        \mathcal T_{t_0}^{(\hat{\bx})}(\eta)=\left
        \{\begin{array}{lll}
        \ge 0 \quad&&\mbox{for} \quad 
        \eta\in [\eta_1,\eta_2],\\
        0\quad&&\mbox{for}\quad \eta\notin [\eta_1,\eta_2].
       \end{array}\right.
    \end{equation}
    Then the unknown excitation time $t_0$ is
    \begin{equation*}
        t_0=\dfrac{\eta_1+\eta_2}{2}.
    \end{equation*}
\end{thm}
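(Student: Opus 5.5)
The plan is to reduce the statement to the slab geometry of \eqref{stripK_D} and then read off Theorem~\ref{IndicatorW}. By that theorem, for fixed $\eta$, $\mathcal W_{\eta}^{(\hat{\bx})}$ is positive exactly on $K_{D,\eta}^{(\hat\bx)}\cap K_{D,\eta}^{(-\hat\bx)}$ and vanishes outside its closure. Hence $\mathcal T_{t_0}^{(\hat{\bx})}(\eta)$ is nonnegative for every $\eta$, and it is positive precisely when this intersection meets $B_R$. Since $D\subset B_R$, and both slabs are translates of $K_D^{(\hat\bx)}$ along $\hat\bx$, the intersection, whenever nonempty, is again a slab orthogonal to $\hat\bx$ lying within $c|t_0-\eta|$ of $K_D^{(\hat\bx)}$. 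For the relevant range of $\eta$ (where the two slabs overlap) this slab meets $B_R$. So the question reduces to deciding when the two open slabs intersect.

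Writing $a=\inf(\hat\bx\cdot D)$, $b=\sup(\hat\bx\cdot D)$ and $s=c(t_0-\eta)$, the two slabs are $\{a-s<\hat\bx\cdot\by<b-s\}$ and $\{a+s<\hat\bx\cdot\by<b+s\}$. Two open intervals of equal length $b-a$ shifted by $2s$ intersect iff $|2s|<b-a$. This gives the condition $|t_0-\eta|<(b-a)/(2c)$, that is, $\eta\in(\eta_1,\eta_2)$ with
\[
\eta_1=t_0-\frac{b-a}{2c},\qquad \eta_2=t_0+\frac{b-a}{2c}.
\]
These agree with the values computed in the proof of the previous version of Theorem~\ref{timeindicator}. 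At the endpoints the closures touch only along a hyperplane, so $\mathcal T$ is $\ge0$ there, which is consistent with the "$\ge 0$" formulation of the statement. Outside $[\eta_1,\eta_2]$ the closures of the slabs are disjoint, so every $\by$ lies outside the closure of the intersection, and hence $\mathcal W_\eta^{(\hat\bx)}\equiv0$ and $\mathcal T=0$.

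The midpoint formula $t_0=(\eta_1+\eta_2)/2$ then follows by averaging. The only delicate step is the use of "finite positive number" from Theorems~\ref{Th3-2} and~\ref{IndicatorW}. This relies on $\bg$ being nonvanishing on the open support interval, which holds under the positivity assumption \eqref{positivity condition}, and one also has to interpret $\mathcal W$ as $0$ when one of the $\mathcal I$'s is $0$. I would handle this by invoking the earlier theorems as stated. Only the support characterization of $\mathcal T$ is needed to identify $\eta_1,\eta_2$, so the sign subtleties do not affect the conclusion about $t_0$.
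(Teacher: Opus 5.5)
Your proposal is correct and follows essentially the same route as the paper: both use Theorem~\ref{IndicatorW} to reduce the statement to deciding when the translated slabs $K_{D,\eta}^{(\hat\bx)}$ and $K_{D,\eta}^{(-\hat\bx)}$ overlap, obtain $\eta_{1,2}=t_0\mp\frac{1}{2c}\big[\sup(\hat\bx\cdot D)-\inf(\hat\bx\cdot D)\big]$, and average. Your version only makes explicit the overlap condition, the endpoint behaviour, and the role of $B_R$. For the $B_R$ step, a sharper justification than ``within $c|t_0-\eta|$ of $K_D^{(\hat\bx)}$'' is that the intersection equals $\{\by:\inf(\hat\bx\cdot D)+c|t_0-\eta|<\hat\bx\cdot\by<\sup(\hat\bx\cdot D)-c|t_0-\eta|\}\subset K_D^{(\hat\bx)}$, which meets $B_R$ because $D\subset B_R$.
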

\begin{proof}
    Combining the definition of \eqref{stripK_D} and Theorem \ref{IndicatorW}, we know that $\eta_1$ and $\eta_2$ denote the moment when $K_{D,\eta}^{(\hat\bx)}$ and $K_{D,\eta}^{(-\hat\bx)}$ begin to converge and completely separate, respectively. Furthermore, we can infer that 
    \begin{equation*}
        \begin{split}
            &\eta_1=t_0-\dfrac{1}{2c}[\sup(\hat\bx\cdot D)-\inf(\hat\bx\cdot D)], \\
            &\eta_2=t_0-\dfrac{1}{2c}[\inf(\hat\bx\cdot D)-\sup(\hat\bx\cdot D)].
        \end{split}
    \end{equation*}
    By simple calculation, it can be concluded that $t_0=\dfrac{\eta_1+\eta_2}{2}$.
\end{proof}

After determining the source excitation time $t_0$ according to Theorem \ref{timeindicator}, the smallest slab $K_{D}^{(\hat\bx)}$ containing
the source support can be determined by the following indicator function using multi-frequency electricity far field pattern at a single direction,
\begin{equation*}\label{IndicatorIII4.2}
    \mathcal I^{(\hat{\bx})}(\by,\om) =\int_{\R}\mathbb{E}^{\infty}(\hat \bx,\om)\overline{\phi^{(\hat{\bx})}(\by,\om)} \mathrm{d}{\om}.
\end{equation*}
where $\phi^{(\hat{\bx})}(\by,\om):=\mathrm{e}^{-\mathrm{i}\om(c^{-1}\hat\bx\cdot \by-t_0)}, \by\in\mathbb{R}^3$. Clearly, the indicator function $\mathcal I^{(\hat{\bx})}(\by,\om)$ fulfills that
\begin{equation}\label{I_1}
    \mathcal I^{(\hat{\bx})}(\by,\om)=\left
    \{\begin{array}{lll}
    \mbox{finite positive number}\quad&&\mbox{for} \quad \by\in K_{D}^{(\hat\bx)},\\
     0\quad&&\mbox{for}\quad \by\notin \overline{K_{D}^{(\hat\bx)}}.
     \end{array}\right.
\end{equation}

With multiple but sparse observations directions ${\hat\bx_l}$, we use $\mathcal I^{(\hat{\bx_l})}(\by,\om)$ to denote the indicator function corresponding to each observation direction, where $l = 1,2,...,L$. 
Next, we construct a new indicator function by summing these functions and taking the reciprocal, namely
\begin{equation}\label{I_1^{-1}}
    \mathcal I (\by,\om) = \left[\sum_{l=1}^L \dfrac{1}{\mathcal I^{(\hat{\bx}_l)}(\by,\om)}\right]^{-1}.
\end{equation}
Recall that the convex hull of the source region $D$, defined as the intersection of all half‑spaces containing $D$, can be approximated by the intersection of the slabs $K_{D}^{(\hat\bx_l)}$. We therefore define the $\Theta$-convex hull of $D$ associated with the observation directions ${\hat\bx_l }$ as
\begin{equation*}
    \Theta_D:=\bigcap_{l=1}^{L}K_{D}^{(\hat\bx_l)}.
\end{equation*}
Here $K_{D}^{(\hat\bx_l)}$ denotes the slab determined by the electric far-field data measured in the observation directions $\hat\bx_l$. In this manner, the geometry of the $\Theta$-convex hull of the source support can be recovered.

\begin{thm}\label{convexhull-indicator}
    Let $D$ be connected. Then we have 
     \begin{equation*}
    \mathcal I(\by,\om)=\left
    \{\begin{array}{lll}
    \mbox{finite positive number}\quad&&\mbox{for} \quad \by\in \Theta _{D},\\
     0\quad&&\mbox{for}\quad \by\notin \overline\Theta _{D}.
     \end{array}\right.
\end{equation*}
\end{thm}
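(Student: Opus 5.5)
The plan is to reduce Theorem \ref{convexhull-indicator} to the single-direction characterization \eqref{I_1}, which the paper records for $\mathcal I^{(\hat\bx_l)}$ once $t_0$ is known (it follows from Theorem \ref{Th3-2} with $\eta=t_0$, since then $K_{D,\eta}^{(\hat\bx)}=K_D^{(\hat\bx)}$). After that, the argument is an elementary analysis of the harmonic-type combination $\bigl[\sum_l 1/\mathcal I^{(\hat\bx_l)}\bigr]^{-1}$, with the usual conventions $1/0=+\infty$ and $1/\infty=0$. These are the same conventions used implicitly in Theorem \ref{IndicatorW}.

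\emph{Step 1 (interior).} Let $\by\in\Theta_D=\bigcap_{l=1}^L K_D^{(\hat\bx_l)}$. Then $\by\in K_D^{(\hat\bx_l)}$ for every $l$, so by \eqref{I_1} each $\mathcal I^{(\hat\bx_l)}(\by)$ is a finite positive number. Each reciprocal $1/\mathcal I^{(\hat\bx_l)}(\by)$ is therefore finite and positive, and so is their finite sum over $l=1,\dots,L$. Hence $\mathcal I(\by)$ is finite and positive.

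\emph{Step 2 (exterior).} Let $\by\notin\overline{\Theta_D}$. Here I need some index $l_0$ with $\by\notin\overline{K_D^{(\hat\bx_{l_0})}}$, after which \eqref{I_1} gives $\mathcal I^{(\hat\bx_{l_0})}(\by)=0$. That term makes the sum infinite, so $\mathcal I(\by)=0$. To find $l_0$, I would argue that each slab $K_D^{(\hat\bx_l)}$ is an open convex set. Connectedness of $D$ makes $\hat\bx_l\cdot D$ a genuine interval, and I would use it to ensure that $\inf(\hat\bx_l\cdot D)<\sup(\hat\bx_l\cdot D)$. Then $\Theta_D$ is a finite intersection of open convex sets, each containing $D$. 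Since $D$ is open and nonempty, $\Theta_D\neq\emptyset$.

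\emph{Main obstacle.} The key step is the closure identity $\overline{\bigcap_l K_l}=\bigcap_l\overline{K_l}$ for open convex sets with nonempty common intersection. I would prove it by fixing $\bz\in\Theta_D$ and, for $\by\in\bigcap_l\overline{K_l}$, taking the segment points $\by_s=(1-s)\by+s\bz$ with $s\in(0,1]$. Each $\by_s$ lies in every $K_l$ by the standard fact that a segment from a closure point to an interior point of a convex set stays in the interior except possibly at the endpoint. Letting $s\to0$ gives $\by\in\overline{\Theta_D}$. Therefore $\by\notin\overline{\Theta_D}$ forces $\by\notin\overline{K_{l_0}}$ for some $l_0$, which closes Step 2.

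Points of $\partial\Theta_D$ are excluded from the statement, as in Theorem \ref{IndicatorW}, so no further case is needed.
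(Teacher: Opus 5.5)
Your proof is correct and follows the same two-step route as the paper: inside $\Theta_D$ every $\mathcal I^{(\hat\bx_l)}$ is finite and positive by \eqref{I_1}, and outside at least one slab is missed, so one reciprocal is infinite. Your closure identity $\overline{\bigcap_l K_D^{(\hat\bx_l)}}=\bigcap_l\overline{K_D^{(\hat\bx_l)}}$ is a useful refinement, because the paper jumps directly from $\by\notin\Theta_D$ to $1/\mathcal I^{(\hat\bx_l)}(\by)=\infty$, and \eqref{I_1} alone does not justify that step for points on the boundary of a slab.
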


\begin{proof}
    Since $\Theta_D$ is the intersection of the slabs $K_{D}^{(\hat\bx_l)}$, it follows immediately that $\Theta_D\subset K_{D}^{(\hat\bx_l)}$ for all $l=1,2,\cdots,L$. 
    If $\by\in \Theta _{D}$, then $\by\in K_{D}^{(\hat\bx_l)}$, using the indicator function \eqref{I_1}, we conclude that $\mathcal I (\by,\om)$ is finite and positive. 
    On the other hand, if $\by\notin \Theta _{D}$, then there exists at least one $\hat\bx_l$ such that $\by\notin K_{D}^{(\hat\bx_l)}$ and $\dfrac{1}{\mathcal I^{(\hat{\bx_{l}})}(\by,\om)}=\infty$. It then follows that $\mathcal I (\by,\om)=0$. 
\end{proof}

With all observations $\hat\bx\in\mathbb{S}^2$, one can easily obtain the following uniqueness result.
\begin{thm}
    Assume the condition \eqref{positivity condition} holds on the connected domain $D\subset \mathbb{R}^3$. Then the convex hull of $D$ is uniquely determined by the multi-frequency far-field patterns $\{\boe^\infty(\hat{\bx}, \om): \hat\bx\in\mathbb{S}^2, \om\in[0, W]\}$ with some $W>0$. 
\end{thm}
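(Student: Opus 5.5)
The plan is to reduce the statement to the one-direction slab result (Theorem \ref{thm1}) combined with analyticity in $\om$, and then to take the intersection over all directions. The convex hull of a bounded set satisfies
\begin{equation*}
\overline{\mathrm{conv}(D)}=\bigcap_{\hat\bx\in\mathbb{S}^2}\overline{K_D^{(\hat\bx)}},
\end{equation*}
since every closed half-space containing $D$ has the form $\{\by:\hat\bx\cdot\by\le \sup(\hat\bx\cdot D)+s\}$ with $s\ge0$. It therefore suffices to show that for every $\hat\bx\in\mathbb{S}^2$ the two numbers $\inf(\hat\bx\cdot D)$ and $\sup(\hat\bx\cdot D)$ are determined by the data.

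\textbf{Step 1: extend the data to all frequencies.} Fix $\hat\bx$ and a polarization $\bp^{(\hat\bx)}\perp\hat\bx$. By \eqref{e-infty} and \eqref{source1},
\begin{equation*}
\mathbb{E}^\infty(\hat\bx,\om)=\int_{D}\mathrm{e}^{-\mathrm{i}\om(c^{-1}\hat\bx\cdot\by-t_0)}\,\bp^{(\hat\bx)}\cdot\bj(\by)\,\mathrm{d}\by .
\end{equation*}
Because $D$ is bounded and $\bj\in L^\infty$, this is an entire function of $\om$ of exponential type. Knowing it on $[0,W]$ therefore determines it for all $\om\in\R$. By \eqref{eq:Fouriertransform} it equals $(\mathcal F\bg)(\om)$, so the function $\bg$ in \eqref{eq:g} is uniquely recovered, and with it the supporting interval of $\bg$.

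\textbf{Step 2: identify the supporting interval exactly.} Lemma \ref{21} only gives that the support of $\bg$ is contained in $H=(c^{-1}\inf(\hat\bx\cdot D)-t_0,\,c^{-1}\sup(\hat\bx\cdot D)-t_0)$. Equality is where \eqref{positivity condition} and the connectedness of $D$ are needed. Choose the phase in \eqref{positivity condition}. Then for $\alpha\in H$ we have
\begin{equation*}
\real\bigl(\mathrm{e}^{\mathrm{i}\alpha'}\bg(\alpha)\bigr)\ge c_0\,|\Gamma(c(t_0+\alpha))|,
\end{equation*}
where $|\Gamma(\cdot)|$ is the two-dimensional measure of the cross-section. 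Since $D$ is open and connected, its projection $\hat\bx\cdot D$ is an open interval, and every hyperplane strictly between the extreme values meets $D$ in a nonempty relatively open set, hence in a set of positive area. So $\bg\ne0$ on all of $H$, and the supporting interval is exactly $H$; this is also why the set \eqref{zeroset1} is null.

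\textbf{Step 3: remove the unknown $t_0$ and intersect.} Performing Steps 1–2 for $\hat\bx$ and for $-\hat\bx$, and using \eqref{relation}, gives the two intervals $H_{\pm}$. Their endpoints determine $t_0$ through the midpoint relation of Theorem \ref{timeindicator}: explicitly,
\begin{equation*}
\inf H_{+}+\sup H_{-}=-2t_0 .
\end{equation*}
Once $t_0$ is known, $\inf(\hat\bx\cdot D)$ and $\sup(\hat\bx\cdot D)$ follow, and hence $K_D^{(\hat\bx)}$. Intersecting over all $\hat\bx$ recovers the closed convex hull. Since $D$ is Lipschitz, the open convex hull is the interior of this set, so it is determined as well.

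\textbf{Main obstacle.} I expect the hardest step to be the positivity of the cross-sectional integral in Step 2. One has to make sure that \eqref{positivity condition} holds for the chosen $\bp^{(\hat\bx)}$ for every $\hat\bx$; I would read the hypothesis as uniform in the polarization. One also has to justify that the cross-sections have positive surface measure throughout the open projection interval; connectedness and openness of $D$ give this. The analytic continuation in Step 1 is standard, via the Paley–Wiener theorem.
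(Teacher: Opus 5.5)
Your proof is correct and is essentially the argument the paper leaves implicit (the paper gives no written proof and only says the result is ``easily obtained''): analytically continue the data from $[0,W]$, recover $\bg$ and its supporting interval as in Theorem~\ref{thm1} using \eqref{positivity condition}, recover $t_0$ from the pair $\pm\hat\bx$, and intersect the slabs $K_D^{(\hat\bx)}$ over all directions; your identity $\inf H_{+}+\sup H_{-}=-2t_0$ is the same fact as the paper's $t_0=(\eta_1+\eta_2)/2$. Your Step 2 supplies the positivity-of-cross-sections justification that the paper omits, and it only needs the positivity condition for one admissible polarization $\bp^{(\hat\bx)}$ per direction, not uniformly in the polarization.
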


\begin{rem}
    It should be noted that the information contained in $\boe^\infty(\hat{\bx}, \om)$ for $W>0$ is equivalent to the complete far-field data for all $\om>0$, due to the analytic dependence of $\boe^\infty(\hat{\bx}, \om)$ on $\om$.
\end{rem}

\section{Inverse source problem 2}\label{sec:4}
\subsection{Inverse Fourier transform of far-field pattern}\label{sec:4.1}

In this section, we assume that the source radiates over a finite time interval $[t_{\min},t_{\max}]$ , for which either the excitation time  or the termination time is known. Obviously, $\bof(\bx,\omega)$ can be expressed as
\begin{equation*}
 \bof(\bx,\omega)=\int_{t_{\min}}^{t_{\max}}\bj(\bx,t)\mathrm{e}^{\mathrm{i}\omega t}\mathrm{d}t.
\end{equation*}
It is clear that the time‑dependent source function considered in the previous section can be expressed as a function separable into temporal and spatial variables, and the frequency‑ or wavenumber‑dependent portion of the source function can be explicitly isolated via the inverse Fourier transform. By contrast, the source considered in the present section is characterized by a coupling between its spatial and frequency components, such that it can no longer be decoupled through the inverse Fourier transform. We still adopt the ideas from Section 3, for $\hat{\bx}\in \s^2$ and $k>0$, the far-field pattern of the electric field is 
\begin{equation*}
    \boe^\infty(\hat \bx,\om) =\mathrm{i}\om\mu(\mathbb I -\hat\bx\hat\bx^\mathsf{T}) \int_{t_{\min}}^{t_{\max}}\int_{\R^3} \mathrm{e}^{-\mathrm{i}\om c^{-1}(\hat{\bx}\cdot \by -t)} \bj(\by,t)\,\mathrm{d}\by \mathrm{d}t.
\end{equation*}

Analogous to the preceding text, for any fixed $\hat\bx\in\mathbb{S}^2$, choose the vertical vector $\bp^{(\hat\bx)}\in\mathbb{S}^2$ of $\hat\bx$. Then the inner product of $\bp^{(\hat\bx)}$ and the electric far-field pattern $\boe^{\infty}$ can be expressed as 
\begin{equation*}
    \bp^{(\hat\bx)}\cdot\boe^{\infty}(\hat \bx,\om)
    = \mathrm{i}\om\mu(\mathcal{F}\boh)(\om),
\end{equation*}
with
\begin{equation*}
   \begin{split}
       \boh(\alpha)&=\int_{t_{\min}}^{t_{\max}}{\int_{\Gamma(t+\alpha)} \bp^{(\hat\bx)}\cdot \bj(\by,t)}\mathrm{d}s(\by)\mathrm{d}t\\
       &=\int_{t_{\min}+\alpha}^{t_{\max}+\alpha}{\int_{\Gamma(t)} \bp^{(\hat\bx)}\cdot \bj(\by,t-\alpha)}\mathrm{d}s(\by)\mathrm{d}t.
   \end{split}
\end{equation*}

\begin{thm}\label{thm41}
    Define the region between two parallel hyperplanes as a slab $S(\hat\bx;\bj,t)$ and represent it as
    \begin{equation*}
        S(\hat\bx;\bj,t):=\{\by\in\mathbb{R}^3:c^{-1}\inf{(\hat\bx\cdot D)}-t_{\max}<\hat\bx\cdot\by<c^{-1}\sup{(\hat\bx\cdot D)}-t_{\min}\}.
    \end{equation*}
    If the set
    \begin{equation*}\label{zeroset}
     \{\alpha\in\mathbb{R}:\Gamma(t+\alpha)\subset S(\hat\bx;\bj,t),\boh(\alpha)=0\}
    \end{equation*}
    has Lebesgue measure zero, then the slab $S(\hat\bx;\bj,t)$ can be uniquely determined by the data  $\bp^{(\hat\bx)}\cdot\boe^{\infty}(\hat \bx,\om)$ for all $\om\in(\om_{\min},\om_{\max})$ at the observation direction $\hat\bx\in\mathbb{S}^2$.
\end{thm}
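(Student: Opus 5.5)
We mirror the argument of Theorem \ref{thm1}, with the single-time function $\bg$ replaced by the time-integrated function $\boh$. Everything rests on three facts: the data determine $\mathcal{F}\boh$ on the whole line; $\boh$ has an explicit supporting interval; and the measure-zero hypothesis lets us read the slab off from where $\boh\neq 0$.

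\textbf{Step 1 (recovering $\mathcal F\boh$ on $\R$).} Dividing the data by $\mathrm{i}\om\mu$ gives $(\mathcal{F}\boh)(\om)$ for $\om\in(\om_{\min},\om_{\max})$. We first check that $\boh$ has compact support. Since $\bj(\cdot,t)$ is supported in $D\subset B_R$ and $t$ ranges over the bounded interval $[t_{\min},t_{\max}]$, the inner integral over $\Gamma(t+\alpha)$ vanishes unless $t+\alpha$, after the scaling by $c$ used in Lemma \ref{21}, lies in $\hat\bx\cdot D$. Hence $\boh$ is compactly supported and lies in $L^1\cap L^2$. By Paley--Wiener, $\mathcal{F}\boh$ is then entire in $\om$, so by analytic continuation its values on $(\om_{\min},\om_{\max})$ determine it on all of $\R$. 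Fourier inversion then recovers $\boh$ almost everywhere.

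\textbf{Step 2 (the supporting interval of $\boh$).} Use the shifted representation
\[
\boh(\alpha)=\int_{t_{\min}+\alpha}^{t_{\max}+\alpha}\int_{\Gamma(t)}\bp^{(\hat\bx)}\cdot\bj(\by,t-\alpha)\,\mathrm{d}s(\by)\,\mathrm{d}t .
\]
The inner integral is zero unless $\Gamma(t)\neq\emptyset$, that is, unless $t$ lies in the scaled interval $c^{-1}(\hat\bx\cdot D)$. So $\boh(\alpha)=0$ whenever $[t_{\min}+\alpha,\,t_{\max}+\alpha]$ misses $(c^{-1}\inf(\hat\bx\cdot D),\,c^{-1}\sup(\hat\bx\cdot D))$. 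This happens exactly when $\alpha>c^{-1}\sup(\hat\bx\cdot D)-t_{\min}$ or $\alpha<c^{-1}\inf(\hat\bx\cdot D)-t_{\max}$. Consequently
\[
\supp\boh\subset\big(c^{-1}\inf(\hat\bx\cdot D)-t_{\max},\;c^{-1}\sup(\hat\bx\cdot D)-t_{\min}\big),
\]
and these are precisely the bounds defining $S(\hat\bx;\bj,t)$.

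\textbf{Step 3 (reading off the slab).} The hypothesis says $\boh(\alpha)\neq0$ for almost every $\alpha$ whose level set lies in the slab. So the essential support of $\boh$ is the whole interval above, up to a null set. Taking closures as in Theorem \ref{thm1},
\[
S(\hat\bx;\bj,t)=\overline{\{\by:\ \hat\bx\cdot\by=\alpha,\ \boh(\alpha)\neq0\}},
\]
and the right-hand side is determined by $\boh$ alone. By Step 1, $\boh$ is determined by the data, which gives uniqueness.

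\textbf{Main obstacle.} Step 2 needs care, because the time integration mixes spatial and temporal shifts. The convention for $\Gamma(t)$ must be fixed consistently (with or without the factor $c$, as in the definition of $\Gamma$ for Lemma \ref{21}) so that the endpoints come out exactly as $c^{-1}\inf(\hat\bx\cdot D)-t_{\max}$ and $c^{-1}\sup(\hat\bx\cdot D)-t_{\min}$. Moreover, sharpness of the endpoints cannot come from support considerations alone, since cancellations in the $t$-integral could shrink the support. This is exactly what the measure-zero assumption rules out, and we would invoke it explicitly there.
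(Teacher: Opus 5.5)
Your proposal is correct and follows the same route as the paper, which omits this proof and refers back to the argument of Theorem~\ref{thm1}. In both, the preprocessed data is, up to the factor $\mathrm{i}\om\mu$, the Fourier transform of $\boh$; analyticity in $\om$ (your Paley--Wiener step) recovers $\boh$ from the band $(\om_{\min},\om_{\max})$; the support computation gives $\big(c^{-1}\inf(\hat\bx\cdot D)-t_{\max},\,c^{-1}\sup(\hat\bx\cdot D)-t_{\min}\big)$; and the measure-zero hypothesis makes the endpoints identifiable. Your version is slightly more careful than the paper's, because you work with the essential support of $\boh$ (the data only determine $\boh$ almost everywhere) and you fix the convention for $\Gamma$ that makes the factor $c$ come out right.
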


\begin{rem}
Clearly, we can deduce that 
\begin{equation*}
    \mathrm{supp}\boh(\alpha)\subset(c^{-1}\inf(\hat\bx\cdot D)-t_{\max},c^{-1}\sup(\hat\bx\cdot D)-t_{\min}).
\end{equation*}
\end{rem}

\begin{rem}
    The proof of Theorem \ref{thm41}, Theorem\ref{IndicatorI4.2} and Theorem \ref{IndicatorW43} closely resembles the proof of Theorem \ref{thm1}, Theorem \ref{Th3-2} and Theorem \ref{IndicatorW}, consequently, it has been omitted here for brevity. 
\end{rem}

\subsection{Indicator and test functions}\label{sec:4.2}

For any vector $\by\in\mathbb{R}^3$ and a fixed time instant $\eta$, we define the test function $\phi^{\hat{\bx}}_{\eta}(\by,\om)$ and the auxiliary indicator function $\mathcal I^{(\hat{\bx})}_{\eta}(\by,\om)$, both of which are identical to those described in Section \ref{sec:3.2}.

Define the unbounded parallel slabs which are  perpendicular to the observation direction $\hat\bx$
\begin{equation}\label{stripT_D}
    \begin{split}
        &T_{D,\eta}^{(\hat\bx)}:=\{\by\in\mathbb{R}^3: \inf(\hat\bx\cdot D)-c(t_{\max}-\eta)<\hat\bx\cdot\by<\sup(\hat\bx\cdot D)-c(t_{\min}-\eta)\},\\
        &T_{D,\eta}^{(-\hat\bx)}:=\{\by\in\mathbb{R}^3: \inf(\hat\bx\cdot D)+c(t_{\min}-\eta)<\hat\bx\cdot\by<\sup(\hat\bx\cdot D)+c(t_{\max}-\eta)\},
    \end{split}    
\end{equation}
We can still consider it as the result of translating $K_{D}^{(\hat\bx)}$ along or away from $\hat\bx$, respectively. Next we can draw the following conclusion.

\begin{thm}\label{IndicatorI4.2}
    Let $\hat{\bx}\in\mathbb{R}^2$ be fixed, we have
    \begin{equation*}
        \mathcal I^{(\hat{\bx})}_{\eta}(\by,\om)=\left
        \{\begin{array}{lll}
        \mbox{finite positive number}\quad&&\mbox{for} \quad \by\in T_{D,\eta}^{(\hat\bx)},\\
        0\quad&&\mbox{for}\quad \by\notin \overline{T_{D,\eta}^{(\hat\bx)}}.
       \end{array}\right.
    \end{equation*}
\end{thm}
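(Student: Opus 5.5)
We mirror the proof of Theorem \ref{Th3-2}, replacing the function $\bg$ by $\boh$. First we write the preprocessed data as a one-dimensional Fourier transform. Set
\[
\mathbb{E}^{\infty}(\hat\bx,\om)=\dfrac{1}{\mathrm{i}\om\mu}\bp^{(\hat\bx)}\cdot\boe^{\infty}(\hat\bx,\om)=(\mathcal F\boh)(\om).
\]
In the representation of $\boh$, the level sets should be taken as $\Gamma(c(t+\alpha))=\{\by\in D:\hat\bx\cdot\by=c(t+\alpha)\}$. This is the same substitution $\alpha=c^{-1}\hat\bx\cdot\by-t$ used in Lemma \ref{21}, now done for each fixed $t\in(t_{\min},t_{\max})$, followed by Fubini in $t$. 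Next we express the test function $\phi^{(\hat\bx)}_\eta(\by,\om)$ as the Fourier transform of $\delta(\alpha-c^{-1}\hat\bx\cdot\by+\eta)$. Parseval's identity then gives, exactly as before,
\[
\mathcal I^{(\hat\bx)}_\eta(\by)=\boh(c^{-1}\hat\bx\cdot\by-\eta).
\]

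The second step is to locate the support of $\boh$. For fixed $t$, the inner surface integral vanishes unless $\inf(\hat\bx\cdot D)\le c(t+\alpha)\le\sup(\hat\bx\cdot D)$. Taking the union over $t\in[t_{\min},t_{\max}]$, we get $\boh(\alpha)=0$ whenever
\[
\alpha<c^{-1}\inf(\hat\bx\cdot D)-t_{\max}\quad\text{or}\quad \alpha>c^{-1}\sup(\hat\bx\cdot D)-t_{\min}.
\]
This recovers the support inclusion stated in the remark after Theorem \ref{thm41}. We then substitute $\alpha=c^{-1}\hat\bx\cdot\by-\eta$ and multiply by $c$. The condition $\alpha\in(c^{-1}\inf(\hat\bx\cdot D)-t_{\max},\,c^{-1}\sup(\hat\bx\cdot D)-t_{\min})$ becomes
\[
\inf(\hat\bx\cdot D)-c(t_{\max}-\eta)<\hat\bx\cdot\by<\sup(\hat\bx\cdot D)-c(t_{\min}-\eta),
\]
which is precisely $\by\in T^{(\hat\bx)}_{D,\eta}$ from \eqref{stripT_D}. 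This gives the second case: $\mathcal I^{(\hat\bx)}_\eta(\by)=0$ for $\by\notin\overline{T^{(\hat\bx)}_{D,\eta}}$.

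The main obstacle is the first case, namely that $\mathcal I^{(\hat\bx)}_\eta$ is finite and positive inside the slab. Finiteness is easy: $\bj\in L^\infty$ with bounded support, so $\boh$ is bounded. Positivity, however, requires the integrated projection $\boh(\alpha)$ not to vanish for $\alpha$ in the open interval. We will handle this as Theorem \ref{Th3-2} implicitly does, by invoking the positivity hypothesis, namely the nondegeneracy assumption of Theorem \ref{thm41} that the zero set of $\boh$ inside the slab has measure zero. The concrete form we use is a time-dependent analogue of \eqref{positivity condition}:
\[
\real\big(\mathrm e^{\mathrm i\theta}\,\bp^{(\hat\bx)}\cdot\bj(\by,t)\big)\ge c_0>0\quad\text{a.e. on } D\times(t_{\min},t_{\max}).
\]
Under this condition, $\real(\mathrm e^{\mathrm i\theta}\boh(\alpha))$ is an integral of a positive function over the set of pairs $(\by,t)$ with $\by\in\Gamma(c(t+\alpha))$. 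Because $D$ is connected and open and $\supp\bj(\cdot,t)=D$, this set has positive measure for every $\alpha$ in the open interval, so $\boh(\alpha)\neq0$ there. Finally, we interpret ``positive'' in the statement as $|\mathcal I|$, or the real part after phase normalization, just as in Section \ref{sec:3.2}.
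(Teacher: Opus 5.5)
Your proposal is correct and follows the route the paper intends: the paper omits this proof and defers to the argument of Theorem~\ref{Th3-2}, which is what you do, namely writing the preprocessed data as $\mathcal F\boh$, applying Parseval against the delta-transform test function to get $\mathcal I^{(\hat\bx)}_\eta(\by)=\boh(c^{-1}\hat\bx\cdot\by-\eta)$ (up to a harmless constant), and reading off the slab $T^{(\hat\bx)}_{D,\eta}$ from the support of $\boh$. Your corrected level sets $\Gamma(c(t+\alpha))$ and your explicit positivity assumption, which gives $\boh\neq0$ for a.e.\ $\alpha$ in the open interval, supply what the paper's argument takes for granted; without some such nondegeneracy condition the first case of the statement cannot hold.
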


Now we turn to recover the unknown excitation time $t_{\min}$ or termination time $t_{\max}$, to begin with the indicator function determined by a pair of opposite directions
\begin{equation*}
    \mathcal W_{\eta}^{(\hat{\bx})}(\by):=\left[\frac{1}{\mathcal I_{\eta}^{(\hat{\bx})}(\by)}+  \frac{1}{\mathcal I_{\eta}^{(-\hat{\bx})}(\by)}   \right]^{-1}=\frac{\mathcal I_{\eta}^{(\hat{\bx})}(\by)\; \mathcal I_{\eta}^{(-\hat{\bx})}(\by)}{\mathcal I_{\eta}^{(\hat{\bx})}(\by)+\mathcal I_{\eta}^{(-\hat{\bx})}(\by)}.
\end{equation*}
Clearly, Theorem \ref{IndicatorI4.2} and the definition in \eqref{stripT_D} yields the following theorem.
\begin{thm}\label{IndicatorW43}
    Let $\hat{\bx}\in\mathbb{R}^2$ be fixed, we have
    \begin{equation*}
        \mathcal W_{\eta}^{(\hat{\bx})}(\by,\om)=\left
        \{\begin{array}{lll}
        \mbox{finite positive number}\quad&&\mbox{for} \quad \by\in K_{D,\eta}^{(\hat\bx)}\cap T_{D,\eta}^{(-\hat\bx)},\\
        0\quad&&\mbox{for}\quad \by\notin \overline{K_{D,\eta}^{(\hat\bx)}\cap T_{D,\eta}^{(-\hat\bx)}}.
       \end{array}\right.
    \end{equation*}
\end{thm}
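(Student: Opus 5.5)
The plan is to copy the argument of Theorem~\ref{IndicatorW}. The only ingredient is the support description of each of the two auxiliary indicators entering $\mathcal W_{\eta}^{(\hat{\bx})}$, combined through the harmonic-mean structure
\[
\mathcal W_{\eta}^{(\hat{\bx})}(\by)=\frac{\mathcal I_{\eta}^{(\hat{\bx})}(\by)\,\mathcal I_{\eta}^{(-\hat{\bx})}(\by)}{\mathcal I_{\eta}^{(\hat{\bx})}(\by)+\mathcal I_{\eta}^{(-\hat{\bx})}(\by)}.
\]
This expression is finite and positive exactly when both factors are finite and positive. It vanishes as soon as one factor vanishes, with the convention $1/0=\infty$ used in Theorem~\ref{IndicatorW}.

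\emph{Step 1: the observation direction $-\hat\bx$.} I would apply Theorem~\ref{IndicatorI4.2} with $\hat\bx$ replaced by $-\hat\bx$, using \eqref{relation} to rewrite $\inf(-\hat\bx\cdot D)$ and $\sup(-\hat\bx\cdot D)$ in terms of $\hat\bx\cdot D$. The support interval of $\boh$ for direction $-\hat\bx$ is $(-c^{-1}\sup(\hat\bx\cdot D)-t_{\max},\,-c^{-1}\inf(\hat\bx\cdot D)-t_{\min})$. The identity $\mathcal I_{\eta}^{(-\hat\bx)}(\by)=\boh(-c^{-1}\hat\bx\cdot\by-\eta)$, obtained by the Parseval computation in the proof of Theorem~\ref{Th3-2}, then shows that $\mathcal I_{\eta}^{(-\hat\bx)}$ is finite and positive on $T_{D,\eta}^{(-\hat\bx)}$ and zero outside its closure. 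This is a direct check against \eqref{stripT_D}.

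\emph{Step 2: the observation direction $+\hat\bx$.} The statement pairs $T_{D,\eta}^{(-\hat\bx)}$ with $K_{D,\eta}^{(\hat\bx)}$, not with $T_{D,\eta}^{(\hat\bx)}$. So I must show that $\mathcal I_{\eta}^{(\hat\bx)}$ is finite and positive on $K_{D,\eta}^{(\hat\bx)}$ and zero outside $\overline{K_{D,\eta}^{(\hat\bx)}}$, which is the dichotomy of Theorem~\ref{Th3-2}. The route I would take is to identify the reference time $t_0$ in $K_{D,\eta}^{(\hat\bx)}$ with the known endpoint of the radiation period. In the setting of this section, one of $t_{\min}$, $t_{\max}$ is known. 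Along the direction $\hat\bx$, the sampling with the known endpoint then collapses the time integral in $\boh(\alpha)$ to the single shift $\alpha=c^{-1}\hat\bx\cdot\by-t_0$. This is what happens in Section~\ref{sec:3.1}, where $\bof$ factors as $\bj(\bx)\mathrm{e}^{\mathrm{i}\om t_0}$. In that reduced form, Theorem~\ref{Th3-2} applies verbatim and gives the support $K_{D,\eta}^{(\hat\bx)}$. A consistency check I would record is that $K_{D,\eta}^{(\hat\bx)}\subset T_{D,\eta}^{(\hat\bx)}$ whenever $t_0\in[t_{\min},t_{\max}]$. This follows directly from comparing the endpoints in \eqref{stripK_D} and \eqref{stripT_D}, so the claimed region is never larger than what Theorem~\ref{IndicatorI4.2} permits.

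\emph{Step 3: combination.} If $\by\in K_{D,\eta}^{(\hat\bx)}\cap T_{D,\eta}^{(-\hat\bx)}$, then both factors are finite and positive by Steps 1 and 2, so $0<\mathcal W_{\eta}^{(\hat{\bx})}(\by)<\infty$. If $\by\notin\overline{K_{D,\eta}^{(\hat\bx)}\cap T_{D,\eta}^{(-\hat\bx)}}$, then $\by$ lies outside the closure of one of the two slabs. The corresponding $\mathcal I$ vanishes, so its reciprocal is infinite and $\mathcal W_{\eta}^{(\hat{\bx})}(\by)=0$, exactly as in the proof of Theorem~\ref{IndicatorW}.

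The main obstacle is Step 2. Justifying that the $+\hat\bx$ indicator has support $K_{D,\eta}^{(\hat\bx)}$ rather than the wider $T_{D,\eta}^{(\hat\bx)}$ requires pinning down precisely which known time plays the role of $t_0$, and checking that no other time shifts contribute to the support. Steps 1 and 3 are routine.
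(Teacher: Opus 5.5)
\textbf{The gap is your Step~2, and the claim there is false in this setting.} The trouble is not missing detail. In Section~\ref{sec:4} the auxiliary indicator $\mathcal I_{\eta}^{(\hat\bx)}$ uses the same test function $\phi_{\eta}^{(\hat\bx)}$ as in Section~\ref{sec:3.2}, and that function contains neither $t_{\min}$ nor $t_{\max}$. The Parseval computation of Theorem~\ref{Th3-2} therefore gives $\mathcal I_{\eta}^{(\hat\bx)}(\by)=\boh(c^{-1}\hat\bx\cdot\by-\eta)$, where
\[
\boh(\alpha)=\int_{t_{\min}}^{t_{\max}}\int_{\Gamma(c(t+\alpha))}\bp^{(\hat\bx)}\cdot\bj(\by,t)\,\mathrm{d}s(\by)\,\mathrm{d}t
\]
is a genuine superposition over all radiating times. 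Knowing one endpoint changes neither the data nor this formula, so nothing collapses to a single shift. The factorization $\bof(\bx,\om)=\bj(\bx)\mathrm{e}^{\mathrm{i}\om t_0}$ of Section~\ref{sec:3.1} is the case $\bj(\bx,t)=\bj(\bx)\delta(t-t_0)$, which is exactly what is not assumed here. Under the nondegeneracy assumption of Theorem~\ref{thm41}, $\boh$ is nonzero a.e.\ on $(c^{-1}\inf(\hat\bx\cdot D)-t_{\max},\,c^{-1}\sup(\hat\bx\cdot D)-t_{\min})$. Hence $\mathcal I_{\eta}^{(\hat\bx)}$ fails to vanish on $T_{D,\eta}^{(\hat\bx)}\setminus\overline{K_{D,\eta}^{(\hat\bx)}}$. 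That set is nonempty for every choice of $t_0$, because $T_{D,\eta}^{(\hat\bx)}$ is wider than $K_{D,\eta}^{(\hat\bx)}$ by $c(t_{\max}-t_{\min})>0$. Your consistency check $K_{D,\eta}^{(\hat\bx)}\subset T_{D,\eta}^{(\hat\bx)}$ also points the wrong way: vanishing outside $\overline{K_{D,\eta}^{(\hat\bx)}}$ would require the reverse inclusion.

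\textbf{The fix is to read $K_{D,\eta}^{(\hat\bx)}$ in the statement as a slip for $T_{D,\eta}^{(\hat\bx)}$.} Three things support this reading. First, $K_{D,\eta}^{(\hat\bx)}$ involves $t_0$, which does not exist in Problem~2. Second, the sentence introducing the theorem says it follows from Theorem~\ref{IndicatorI4.2} and \eqref{stripT_D} alone. Third, the proof of Theorem~\ref{timeindicator2} works with $T_{D,\eta}^{(\hat\bx)}$ and $T_{D,\eta}^{(-\hat\bx)}$. Their overlap thresholds are exactly $\eta_1=t_{\min}-\frac{1}{2c}(\sup(\hat\bx\cdot D)-\inf(\hat\bx\cdot D))$ and $\eta_2=t_{\max}+\frac{1}{2c}(\sup(\hat\bx\cdot D)-\inf(\hat\bx\cdot D))$. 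With $K_{D,\eta}^{(\hat\bx)}$ and $t_0=t_{\min}$, the upper threshold would instead be $\frac{t_{\min}+t_{\max}}{2}+\frac{1}{2c}(\sup(\hat\bx\cdot D)-\inf(\hat\bx\cdot D))$.

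With the corrected region $T_{D,\eta}^{(\hat\bx)}\cap T_{D,\eta}^{(-\hat\bx)}$, the proof goes through:
\begin{itemize}
\item your Step~2 becomes simply Theorem~\ref{IndicatorI4.2} applied at $+\hat\bx$;
\item your Step~1 is correct and handles $-\hat\bx$;
\item your Step~3 is the harmonic-mean argument of Theorem~\ref{IndicatorW}.
\end{itemize}
That is precisely the argument the paper omits as analogous. Proving the literal $K$ version is a dead end. For suitable $\eta$, its vanishing clause fails at points of $T_{D,\eta}^{(\hat\bx)}\cap T_{D,\eta}^{(-\hat\bx)}$ outside $\overline{K_{D,\eta}^{(\hat\bx)}\cap T_{D,\eta}^{(-\hat\bx)}}$, where both factors of $\mathcal W_{\eta}^{(\hat\bx)}$ are nonzero.
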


By plotting the one-dimensional function
\[
\eta \mapsto \max_{y\in B_R} \mathcal W_\eta^{(\hat x)}(y,\omega),
\]
we define
\[
T_t^{(\hat x)}(\eta) := \max_{y\in B_R} \mathcal W_\eta^{(\hat x)}(y,\omega).
\]

\begin{thm}\label{timeindicator2}
The function \(T_t^{(\hat x)}(\eta)\) satisfies
\[
T_t^{(\hat x)}(\eta)
=
\begin{cases}
\ge 0, & \eta \in [\eta_1,\eta_2], \\
0, & \eta \notin [\eta_1,\eta_2].
\end{cases}
\]
Moreover,
\[
t_{\max}=\eta_1+\eta_2-t_{\min}.
\]
\end{thm}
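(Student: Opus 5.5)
The plan is to repeat the argument of Theorem \ref{timeindicator}, now with the two slabs from \eqref{stripT_D}. By Theorem \ref{IndicatorI4.2}, applied once with $\hat\bx$ and once with $-\hat\bx$, the value $\mathcal W^{(\hat\bx)}_\eta(\by)$ is positive on $T^{(\hat\bx)}_{D,\eta}\cap T^{(-\hat\bx)}_{D,\eta}$ and vanishes outside its closure. This is the same reasoning as in Theorem \ref{IndicatorW43}. I read the $K^{(\hat\bx)}_{D,\eta}$ appearing in Theorem \ref{IndicatorW43} as $T^{(\hat\bx)}_{D,\eta}$, since for the radiating-period source the slab for $+\hat\bx$ is $T^{(\hat\bx)}_{D,\eta}$. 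Hence $T_t^{(\hat x)}(\eta)$ is nonzero exactly for those $\eta$ for which the two slabs intersect inside $B_R$. The problem therefore reduces to a one-dimensional question about when two open intervals of values of $\hat\bx\cdot\by$ overlap.

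First I would verify the form of $T^{(-\hat\bx)}_{D,\eta}$. Write $a=\inf(\hat\bx\cdot D)$ and $b=\sup(\hat\bx\cdot D)$. By \eqref{relation}, the slab for direction $-\hat\bx$ is $\{-b-c(t_{\max}-\eta)<-\hat\bx\cdot\by<-a-c(t_{\min}-\eta)\}$. In terms of $\hat\bx\cdot\by$ this is the interval $(a+c(t_{\min}-\eta),\,b+c(t_{\max}-\eta))$, which agrees with \eqref{stripT_D}. The slab for $+\hat\bx$ is $(a-c(t_{\max}-\eta),\,b-c(t_{\min}-\eta))$.

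Two open intervals $(\ell_1,r_1)$ and $(\ell_2,r_2)$ intersect iff $\ell_1<r_2$ and $\ell_2<r_1$. Here these conditions read
\[
a-c(t_{\max}-\eta)<b+c(t_{\max}-\eta),\qquad a+c(t_{\min}-\eta)<b-c(t_{\min}-\eta).
\]
They are equivalent to $\eta<\eta_2:=t_{\max}+\frac{b-a}{2c}$ and $\eta>\eta_1:=t_{\min}-\frac{b-a}{2c}$. Since $t_{\max}>t_{\min}$ and $b>a$, we have $\eta_1<\eta_2$. So the intersection is nonempty for $\eta\in(\eta_1,\eta_2)$ and empty otherwise, with tangency at the endpoints. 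This gives the stated dichotomy, using $\ge0$ to cover the closed endpoints. Adding the two expressions cancels the geometric term $(b-a)/(2c)$ and yields $\eta_1+\eta_2=t_{\min}+t_{\max}$. Hence $t_{\max}=\eta_1+\eta_2-t_{\min}$, and symmetrically $t_{\min}$ is recovered when $t_{\max}$ is known.

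The main obstacle is making sure that the intersection of the slabs actually meets $B_R$. Otherwise the maximum over $B_R$ could vanish even though the slabs overlap. I would handle this by noting that, for $\eta$ in the range considered, the overlap lies within distance $c\max(|t_{\max}-\eta|,|t_{\min}-\eta|)$ of $K_D^{(\hat\bx)}$. So one takes $R$ large enough (implicitly assumed, as in Theorem \ref{timeindicator}) that $B_R$ contains a point of the overlap region in the $\hat\bx$-direction. The slabs are unbounded in the directions orthogonal to $\hat\bx$, so only the $\hat\bx$-coordinate matters.

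A second, minor point is positivity on the overlap. This depends on $\boh$ not vanishing there, which is the nondegeneracy hypothesis of Theorem \ref{thm41}, inherited through Theorem \ref{IndicatorI4.2}. I would cite it rather than reprove it.
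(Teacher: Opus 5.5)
Your proposal is correct and follows the paper's argument: $\eta_1$ and $\eta_2$ are the moments at which $T^{(\hat\bx)}_{D,\eta}$ and $T^{(-\hat\bx)}_{D,\eta}$ begin and cease to overlap, namely $t_{\min}-\frac{1}{2c}(\sup(\hat\bx\cdot D)-\inf(\hat\bx\cdot D))$ and $t_{\max}+\frac{1}{2c}(\sup(\hat\bx\cdot D)-\inf(\hat\bx\cdot D))$, and adding them cancels the geometric term; you also supply the explicit interval-overlap computation that the paper attributes only to ``the propagation geometry.'' Your $B_R$ caveat is unnecessary: whenever the overlap is nonempty, its interval of values of $\hat\bx\cdot\by$ either contains or lies inside $(\inf(\hat\bx\cdot D),\sup(\hat\bx\cdot D))\subset(-R,R)$, so it always meets $B_R$ and $R$ need not be enlarged.
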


\begin{proof}
Recall that $\eta_1$ and $\eta_2$ denote the moments when 
$T_{D,\eta}^{(\hat\bx)}$ and $T_{D,\eta}^{(-\hat\bx)}$ begin to converge
and become completely separated, respectively.
By the definition of these transition points and the propagation
geometry of the signal, we obtain
\begin{equation*}
\begin{split}
\eta_1 &= t_{\min}-\frac{1}{2c}\big(\sup(\hat\bx\cdot D)-\inf(\hat\bx\cdot D)\big),\\
\eta_2 &= t_{\max}+\frac{1}{2c}\big(\sup(\hat\bx\cdot D)-\inf(\hat\bx\cdot D)\big).
\end{split}
\end{equation*}
Adding the two identities yields
\[
t_{\min}+t_{\max}=\eta_1+\eta_2,
\]
which completes the proof.
\end{proof}

\begin{rem}
Similarly, when the termination time $t_{\max}$ is known, the unknown excitation time $t_{\min}$ can be uniquely determined. In other words, whenever only one of the $t_{\min}$ or $t_{\max}$ is known, the other can always be uniquely determined by Theorem~\ref{timeindicator2}. If both $t_{\min}$ and $t_{\max}$ are unknown, a priori information on the diameter of the support of the source term in the direction perpendicular to some fixed direction is required to recover its location and the radiation interval. Otherwise, the proposed method is not applicable, and this case remains under investigation.
\end{rem}

After determining the source radiation time period $[t_{\min},t_{\max}]$, according to Theorem \ref{timeindicator2}, the smallest slab $K_{D}^{(\hat\bx)}$ containing
the source support can be determined by the indicator function $\mathcal S^{(\hat{\bx})}(\by,\om)$ using multi-frequency electricity far field pattern at a single direction, 
\begin{equation*}
    \mathcal S^{(\hat{\bx})}(\by,\om):=\left[\frac{1}{\mathcal I_{1}^{(\hat{\bx})}(\by)}+  \frac{1}{\mathcal I_{2}^{(-\hat{\bx})}(\by)}   \right]^{-1}=\frac{\mathcal I_{1}^{(\hat{\bx})}(\by)\; \mathcal I_{2}^{(-\hat{\bx})}(\by)}{\mathcal I_{1}^{(\hat{\bx})}(\by)+\mathcal I_{2}^{(-\hat{\bx})}(\by)},
\end{equation*}
where
\begin{equation}\label{IndicatorIII42}
    \mathcal I_j^{(\hat{\bx})}(\by,\om) =\int_{\R}\mathbb{E}^{\infty}(\hat \bx,\om)\overline{\phi_j^{(\hat{\bx})}(\by,\om)} \mathrm{d}{\om},\quad j=1,2,
\end{equation}
with test functions $\phi_1^{(\hat{\bx})}(\by,\om):=\mathrm{e}^{\mathrm{i}\om(c^{-1}\hat\bx\cdot \by-t_{\min})}$, $\phi_2^{(\hat{\bx})}(\by,\om):=\mathrm{e}^{\mathrm{i}\om(c^{-1}\hat\bx\cdot \by-t_{\max})}$, $\by\in\mathbb{R}^3$.

Clearly, the indicator function $\mathcal S^{(\hat{\bx})}(\by,\om)$ fulfills that
\begin{equation*}
    \mathcal S^{(\hat{\bx})}(\by,\om)=\left
    \{\begin{array}{lll}
    \mbox{finite positive number}\quad&&\mbox{for} \quad \by\in K_{D}^{(\hat\bx)},\\
     0\quad&&\mbox{for}\quad \by\notin \overline{K_{D}^{(\hat\bx)}}.
     \end{array}\right.
\end{equation*}
With multiple but sparse observation directions, the shape and location of the source support can be reconstructed.

Analogous to the preceding section, after determining the smallest slab containing the source support for a given observation direction, the geometry and location of the source support can be recovered from multiple but sparse observation directions. Since the derivation follows exactly the same steps as above, we omit here.

\section{Numerical examples}
In this section, we present several three-dimensional numerical experiments to demonstrate the performance of the proposed direct sampling method for reconstructing the support of electromagnetic sources from multi-frequency far-field measurements. The numerical experiments are designed to verify the theoretical results established in Section 3, including the recovery of shifted slabs, the determination of the excitation time, and the reconstruction of the source support.

For simplicity, we restrict our attention to the inverse problem IP1, where the source emits an impulsive signal at an unknown excitation time $t_0$.

Synthetic multi-frequency electric far-field data are generated according to the analytical representation derived in \eqref{e-infty}. The indicator functions are evaluated on a uniform sampling grid covering a bounded reconstruction domain. The reconstructed regions are visualized using slices and iso-surfaces of the corresponding indicator functions.

\subsection{Frequency discretization and implementation details}

In the theoretical formulation of the indicator function, the frequency
variable is integrated over the whole real axis, i.e.,
$\omega \in (-\infty,+\infty)$.
However, in practical computations only a finite frequency band is available.
Therefore, the integration over $(-\infty,+\infty)$ is truncated to a bounded interval
$(-W,W)$.

The function $\omega \mapsto \boe^{\infty}(\hat x,\omega)$ admits an analytic
extension to negative frequencies via the Hermitian symmetry
\[
\boe^{\infty}(\hat x,-\omega)
=
\overline{\boe^{\infty}(\hat x,\omega)},
\qquad \omega>0.
\]
Using this symmetry (and the same property for the test function),
the indicator function \eqref{Indicator}
is approximated on the truncated frequency band as
\begin{align*}
\mathcal I^{(\hat{\bx})}_{\eta}(\by)
&\approx 
\int_{-W}^{W}
\dfrac{1}{\mathrm{i}\omega\mu} 
\, \bp^{(\hat\bx)}\cdot\boe^{\infty}(\hat{\bx},\omega)
\,\overline{\phi_{\eta}^{(\hat{\bx})}(\by,\omega)}
\, d\omega
\\
&=
\int_{0}^{W}
\dfrac{1}{\mathrm{i}\omega\mu} 
\, \bp^{(\hat\bx)}\cdot\boe^{\infty}(\hat{\bx},\omega)
\,\overline{\phi_{\eta}^{(\hat{\bx})}(\by,\omega)}
\, d\omega
+
\int_{-W}^{0}
\dfrac{1}{\mathrm{i}\omega\mu} 
\, \bp^{(\hat\bx)}\cdot\boe^{\infty}(\hat{\bx},\omega)
\,\overline{\phi_{\eta}^{(\hat{\bx})}(\by,\omega)}
\, d\omega
\\
&=
\int_{0}^{W}
\dfrac{1}{\mathrm{i}\omega\mu} 
\, \bp^{(\hat\bx)}\cdot\boe^{\infty}(\hat{\bx},\omega)
\,\overline{\phi_{\eta}^{(\hat{\bx})}(\by,\omega)}
\, d\omega
+
\int_{0}^{W}
\overline{
\dfrac{1}{\mathrm{i}\omega\mu} 
\, \bp^{(\hat\bx)}\cdot\boe^{\infty}(\hat{\bx},\omega)
}
\, \phi_{\eta}^{(\hat{\bx})}(\by,\omega)
\, d\omega
\\
&=
2\,\mathrm{Re}
\left\{
\int_{0}^{W}
\dfrac{1}{\mathrm{i}\omega\mu} 
\, \bp^{(\hat\bx)}\cdot\boe^{\infty}(\hat{\bx},\omega)
\,\overline{\phi_{\eta}^{(\hat{\bx})}(\by,\omega)}
\, d\omega
\right\}.
\end{align*}

In our numerical experiments, we choose the truncated frequency interval
\[
\omega \in (0,20),
\]
which is uniformly discretized into $N_\omega = 200$ sampling points.
More precisely,
\[
\omega_n = n\Delta\omega,
\qquad
n=1,2,\ldots,200,
\]
with
\[
\Delta\omega = \frac{20}{200}.
\]

The truncation from $(-\infty,+\infty)$ to $(-W,W)$ and the subsequent
restriction to positive frequencies, together with the uniform discretization,
lead to a practical implementation of the proposed indicator function.
All indicator functions are evaluated using this discrete formulation
on a uniform Cartesian sampling grid.

Since the reconstruction procedure only requires explicit evaluations of
the indicator functions and does not involve solving additional forward
problems, the overall computational cost grows linearly with respect to
both the number of sampling points and the number of discrete frequencies.

\subsection{Recovery of the shifted slabs $K_{D,\eta}^{(\hat x)}$ and  $K_{D,\eta}^{(-\hat x)}$ in $\R^3$}


We first verify Theorem 3.2 by reconstructing the shifted slabs
\[
K_{D,\eta}^{(\hat x)}
\quad \text{and} \quad
K_{D,\eta}^{(-\hat x)},
\]
using multi-frequency far-field data collected from a single observation direction.
Throughout this subsection, the observation direction is chosen as
$\hat x=(0,0,1),$
and the source support $D$ is assumed to be cube-shaped.

Figures~1 and~2 present the reconstructions of
$K_D^{(\hat x),\eta}$ and $K_D^{(-\hat x),\eta}$,
corresponding to the observation directions
$\hat x=(0,0,1)$ and $-\hat x=(0,0,-1)$, respectively.
Different electromagnetic parameters $(\mu,\varepsilon)$ and shifting moments $\eta$ are considered.
For visualization, we display the cross-sections on the plane $y_1oy_3$.

In Figure~1, the three subfigures (a)--(c) correspond to
\[
-(\varepsilon\mu)^{-1/2}(t_0-\eta)
=
\begin{cases}
0, & \text{in (a)},\\
0.5, & \text{in (b)},\\
1, & \text{in (c)}.
\end{cases}
\]
As $\eta$ varies, the reconstructed slab is translated along the positive
$\hat x$-direction with increasing shift magnitude.

In Figure~2, the three subfigures (a)--(c) correspond to
\[
(\varepsilon\mu)^{-1/2}(t_0-\eta)
=
\begin{cases}
0, & \text{in (a)},\\
-0.5, & \text{in (b)},\\
-1, & \text{in (c)}.
\end{cases}
\]
In this case, the reconstructed slab is translated along the negative
$\hat x$-direction, in full agreement with the theoretical prediction.

In both figures, the numerical shifts coincide precisely with the values
predicted by Theorem~3.2. The reconstructed regions clearly exhibit
slab-like structures perpendicular to the observation direction.
Moreover, varying $\eta$ induces only a rigid translational shift of the slab,
while its geometric profile remains unchanged.
This confirms that the proposed indicator function correctly captures
the directional projection of the source support
from single-direction observation data.

\begin{figure}[H]
	\centering
	\subfigure[$\mu=1; \epsilon=1, t_0=1, \eta=1$]{
		\includegraphics[scale=0.22]{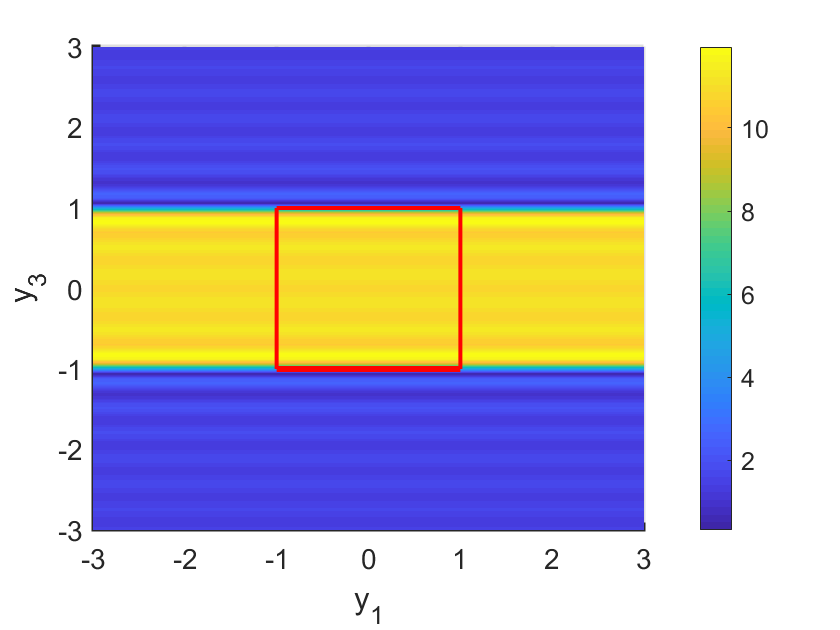}
	}
	\subfigure[$\mu=2; \epsilon=2, t_0=1, \eta=2;
$]{
		\includegraphics[scale=0.22]{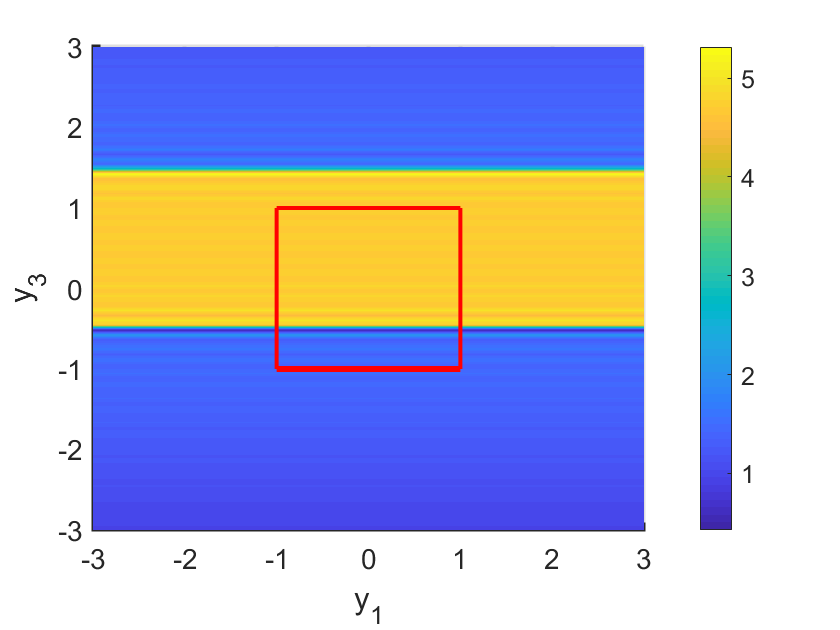}
		
	}
	\subfigure[$\mu=4; \epsilon=1, t_0=1, \eta=3$]{
		\includegraphics[scale=0.22]{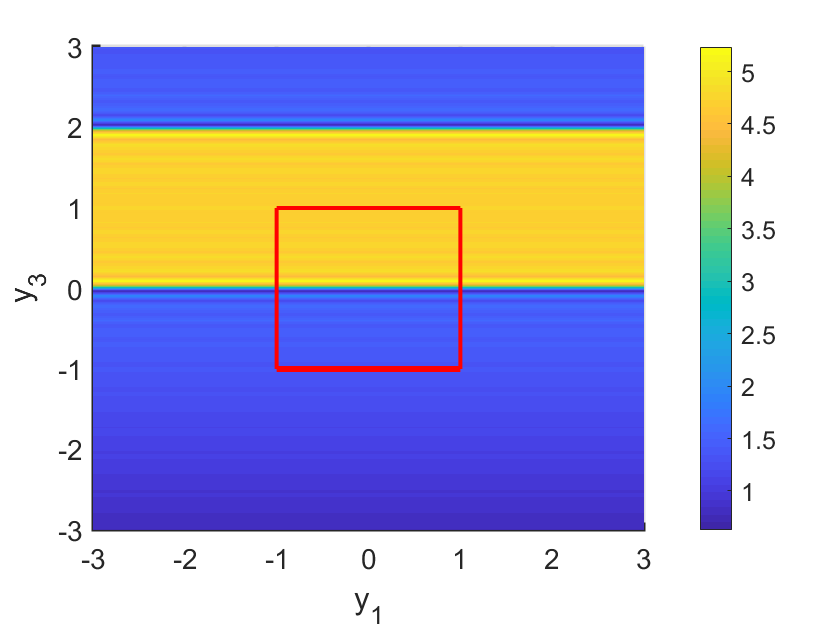}
		
	}
	\caption{Reconstruction of $K_{D,\eta}^{(\hat x)}$ using multi-frequency data from a single observation direction $\hat x=(0,0,1)$, the source is supported in a cube-shaped.
	} \label{fig:3d-rec}
\end{figure}

\begin{figure}[H]
	\centering
	\subfigure[$\mu=1; \epsilon=1, t_0=1, \eta=1$]{
		\includegraphics[scale=0.22]{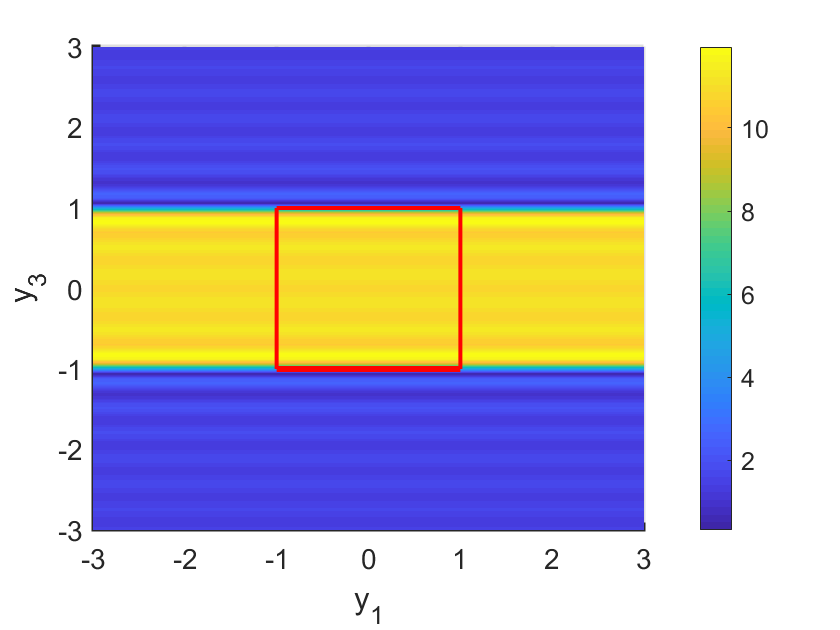}
	}
	\subfigure[$\mu=2; \epsilon=2, t_0=1, \eta=2;
$]{
		\includegraphics[scale=0.22]{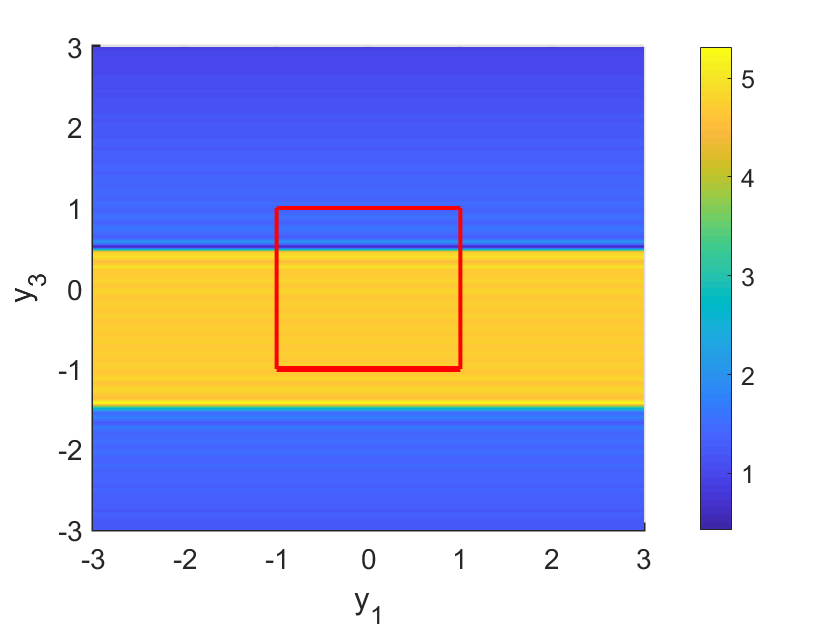}
		
	}
	\subfigure[$\mu=4; \epsilon=1, t_0=1, \eta=3$]{
		\includegraphics[scale=0.22]{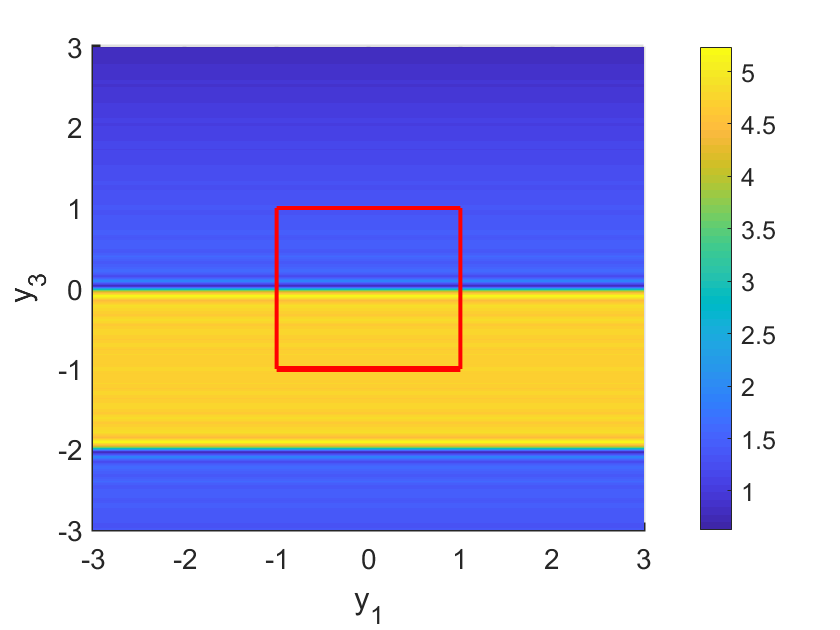}
		
	}
	\caption{Reconstruction of $K_{D,\eta}^{(-\hat x)}$ using multi-frequency data from a single observation direction $-\hat x=(0,0,-1)$, the source is supported in a cube-shaped.
	} \label{fig:3d-rec}
\end{figure}

Figures~3(a)--3(d) illustrate the reconstruction of the intersection
region
\[
K_{D,\eta}^{(\hat x)} \cap K_{D,\eta}^{(-\hat x)},
\]
which provides a numerical verification of Theorem~3.3.
According to the theorem, the indicator function
$W_\eta^{(\hat x)}(y)$ takes positive values precisely inside
the intersection of the two shifted slabs and vanishes outside it.

When the shifting parameter $\eta$ is small,
the slabs $K_{D,\eta}^{(\hat x)}$ and
$K_{D,\eta}^{(-\hat x)}$ are spatially separated,
and therefore their intersection is empty or very limited,
as shown in Figure~3(a).
As $\eta$ increases, the two slabs gradually move toward each other,
leading to an expanding overlap region.
When $\eta$ approaches the correct excitation moment,
a maximal intersection region appears,
which corresponds to the situation predicted by
Theorem~3.3; see Figure~3(b). As $\eta$ increases further away from the correct excitation time,
the slabs $K_{D,\eta}^{(\hat x)}$ and
$K_{D,\eta}^{(-\hat x)}$ move apart again in opposite directions.
Consequently, their overlap region gradually shrinks
and eventually vanishes, as illustrated in Figures~3(c)--3(d).

These numerical results clearly demonstrate that the proposed
indicator function successfully characterizes the intersection
structure of the shifted slabs.
The evolution of the overlap region with respect to $\eta$
provides a reliable numerical mechanism for determining the
unknown excitation time.

\begin{figure}[H]
	\centering
	\subfigure[$t_0=1, \eta=0$]{
		\includegraphics[scale=0.22]{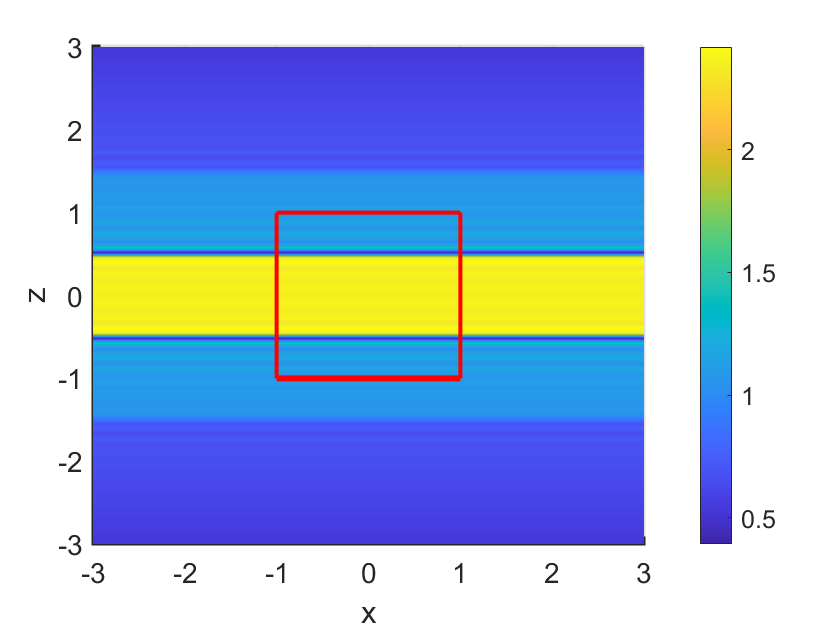}
	}
	\subfigure[$t_0=1, \eta=1;
$]{
		\includegraphics[scale=0.22]{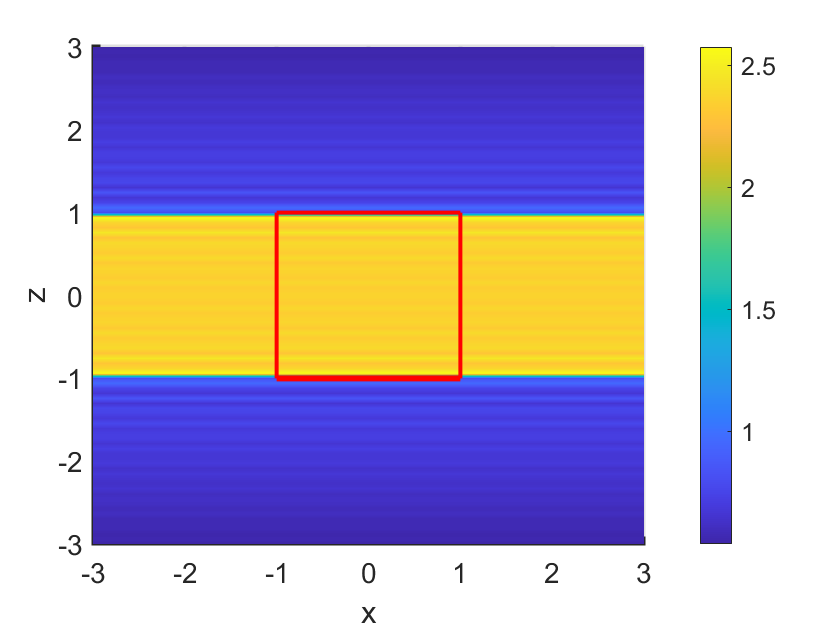}
		
	}
    \\
	\subfigure[$ t_0=1, \eta=2$]{
		\includegraphics[scale=0.22]{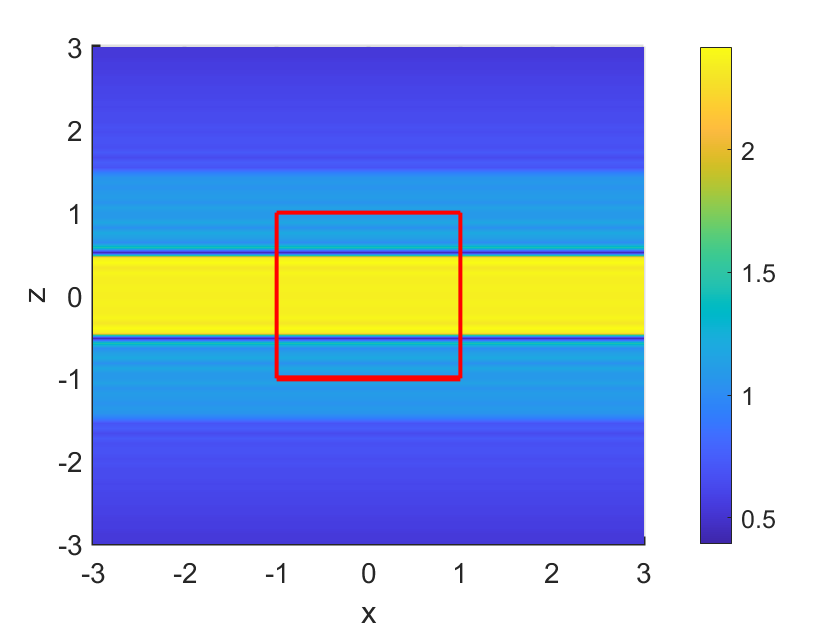}
		
	}
    \subfigure[$ t_0=1, \eta=3$]{
		\includegraphics[scale=0.22]{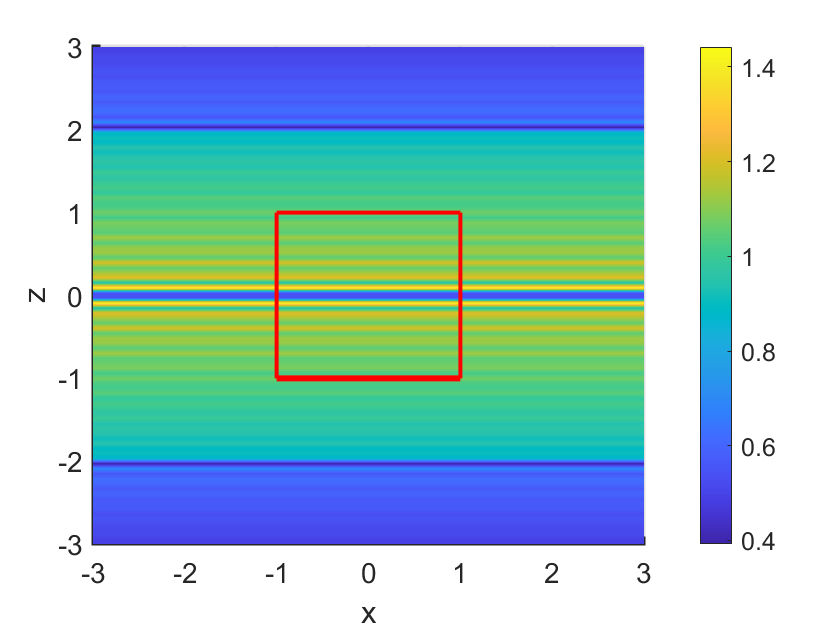}
		
	}
	\caption{Reconstruction of $K_{D,\eta}^{(\hat x)}\bigcap K_{D,\eta}^{(-\hat x)}$using multi-frequency data from a single observation direction $\hat x=(0,0,1),\mu=4,\epsilon=1$, the source is supported in a cube-shaped.
	} \label{fig:3d-rec}
\end{figure}

\subsection{Determination of the excitation time $t_0$ and reconstruction of source supports}

Next, we demonstrate the recovery of the unknown excitation time
$t_0$ based on Theorem~3.4. In practice, the excitation time is determined by evaluating
\[
\mathcal T_{t_0}^{(\hat x)}(\eta)
=\max_{y\in B_R} \mathcal W_\eta^{(\hat x)}(y),
\]
from which the interval $[\eta_1,\eta_2]$ is detected numerically
and the excitation time is recovered via
\[
t_0=\frac{\eta_1+\eta_2}{2}.
\]

We next consider several examples with different source geometries
to illustrate the performance of the proposed reconstruction method.

\paragraph{Example 1: Cubic source.} The source support is chosen as a cube with excitation time $t_0=3$.
Figure~4(a) displays the original cubic source support.
Figure~4(b) presents the reconstruction of the excitation time
obtained from the indicator function
$\mathcal T_{t_0}^{(\hat x)}(\eta)$.
The plateau region of the curve allows one to determine
the interval $[\eta_1,\eta_2]$, from which the excitation
time is accurately recovered as $t_0=3$.
After determining the excitation moment,
the source support is reconstructed by combining
multi-frequency far-field data from three mutually
orthogonal observation directions parallel to the
coordinate axes.
The reconstructed support is shown in Figure~4(c),
where an appropriate iso-surface value is selected.
The recovered region accurately captures both the
location and the cubic geometry of the true source support.

Figure~5 further presents reconstructions obtained from single
observation direction, as $(1,0,0),\quad (0,1,0),\quad (0,0,1).$ 
By combining information from multiple directions,
the reconstructed region provides an accurate approximation
of the convex hull of the source support; as shown in Figure~5(c).

\paragraph{Example 2: Spherical source.}
In this example, the source support is chosen as a ball with excitation
time $t_0=4$. The reconstruction shown in Figure~6 demonstrates that
the proposed method performs stably for smoothly curved geometries,
and the reconstructed iso-surface agrees well with the exact support.

\paragraph{Example 3: Ellipsoidal source.}
Finally, an ellipsoidal source support with excitation time $t_0=5$
is considered; see Figure~7.
Despite the anisotropic geometry, the reconstructed support
successfully captures the elongation and orientation of the true
source region, indicating that the proposed method is insensitive
to the specific shape of the source.

\begin{figure}[H]
	\centering
	\subfigure[A cubic support]{
		\includegraphics[scale=0.22]{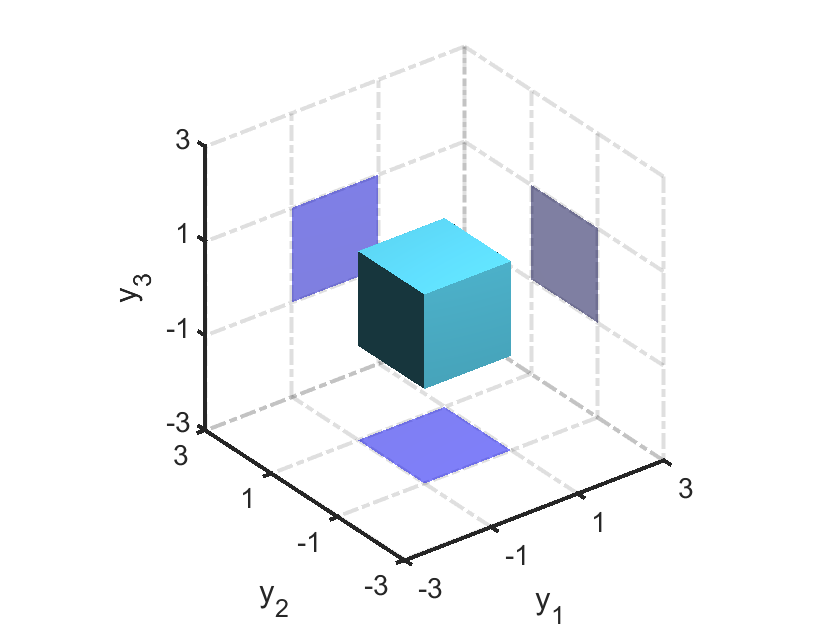}
	}
	\subfigure[$ t_0=3$]{
		\includegraphics[scale=0.18]{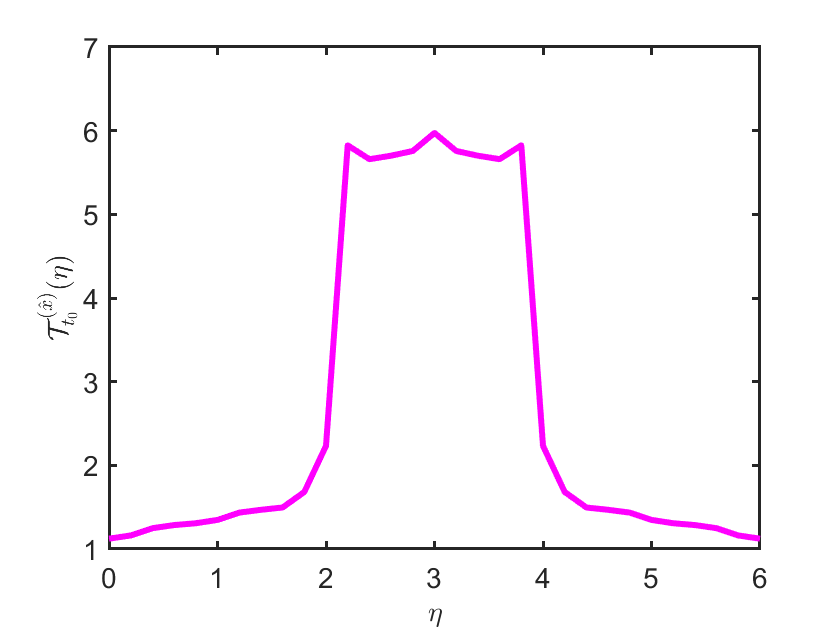}
		
	}
	\subfigure[Iso-surface value $=1$]{
		\includegraphics[scale=0.22]{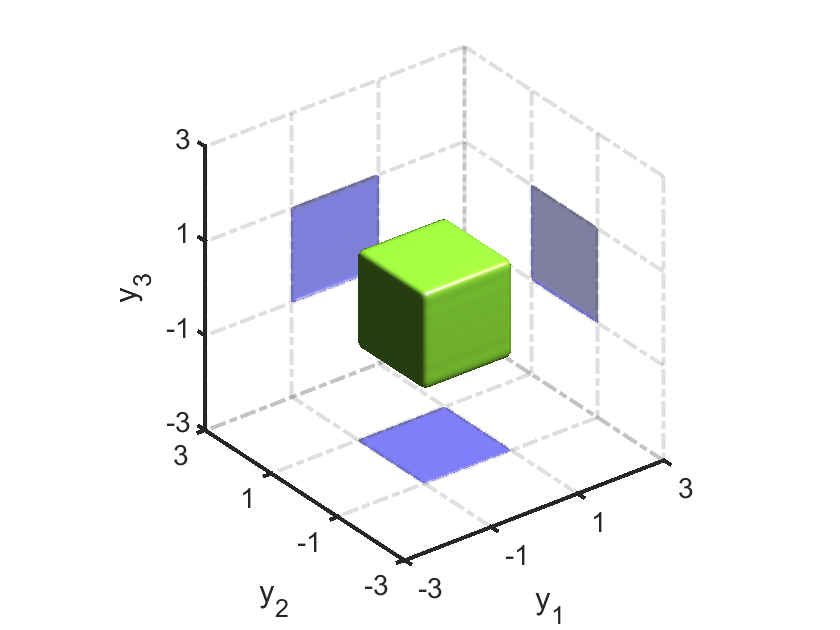}
		
	}

	\caption{Reconstruction of a cubic support.  $\mu=1, \epsilon=1,t_0=3$.
	} \label{fig:3d-rec}
\end{figure}

\begin{figure}[H]
	\centering
    	\subfigure[$\hat x =(1 \;0 \;0)$]{
		\includegraphics[scale=0.2]{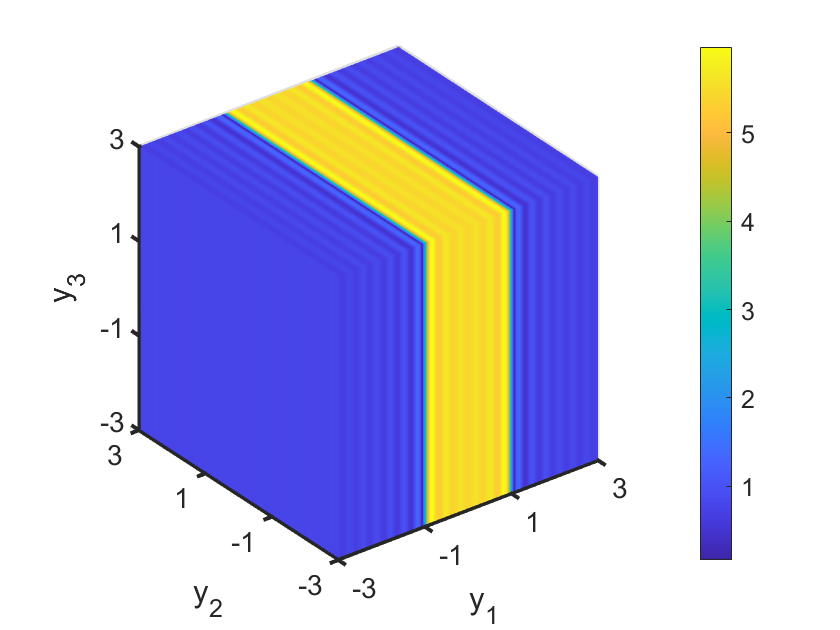}
	}
	\subfigure[$\hat x =(0\;1 \;0)$]{
		\includegraphics[scale=0.2]{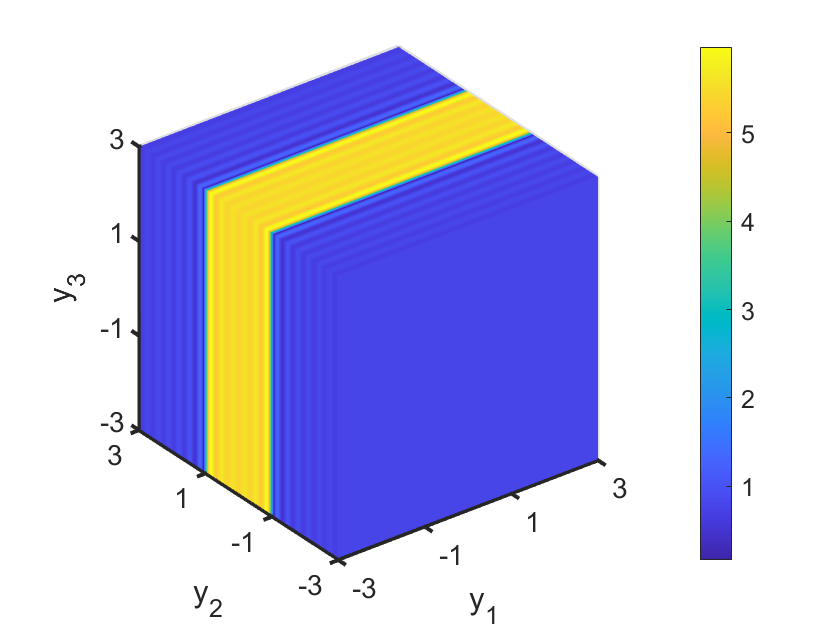}
		
	}
	\subfigure[$\hat x =(0 \;0 \;1)$]{
		\includegraphics[scale=0.2]{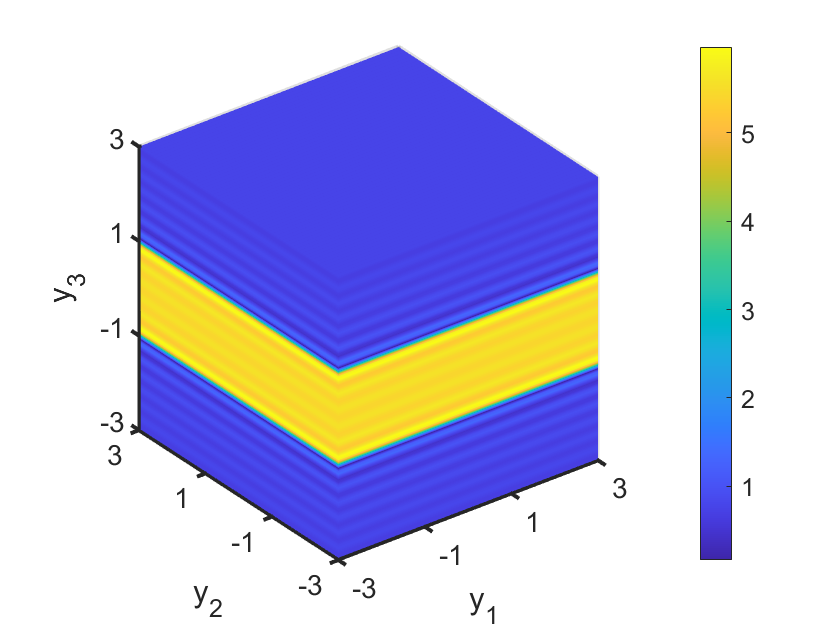}
		
	}
	\caption{Reconstruction of $K_{D,\eta}^{(\hat x)} \cap K_{D,\eta}^{(-\hat x)}$.  $\mu=1, \epsilon=1,t_0=3$.
	} \label{fig:3d-rec}
\end{figure}

\begin{figure}[H]
	\centering
	\subfigure[A ball support]{
		\includegraphics[scale=0.22]{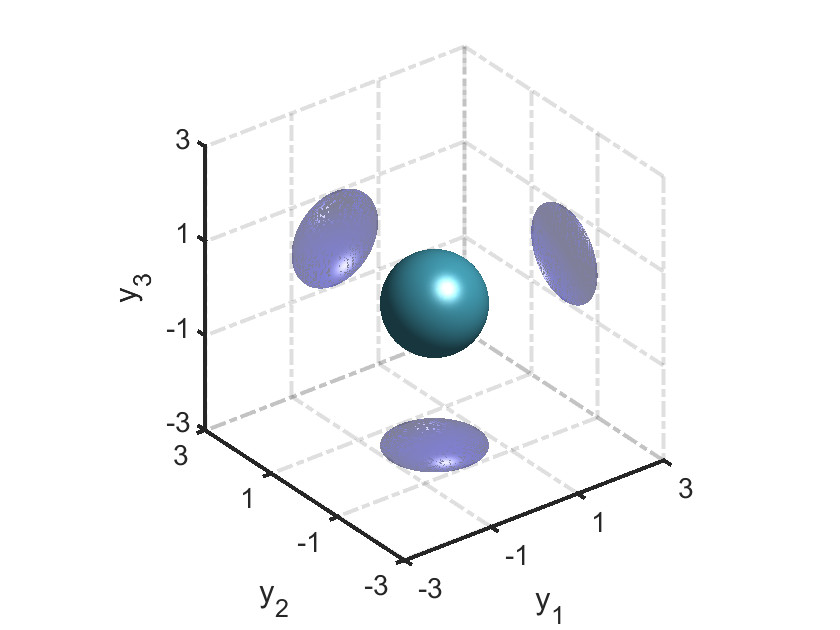}
	}
	\subfigure[$ t_0=4$]{
		\includegraphics[scale=0.18]{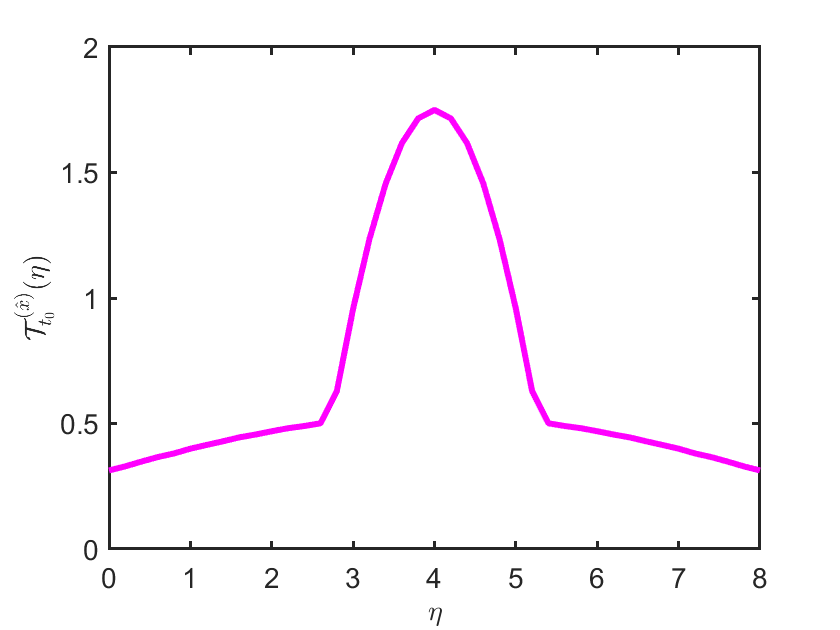}
		
	}
	\subfigure[Iso-surface  $=0.06$ and $M=15$]{
		\includegraphics[scale=0.22]{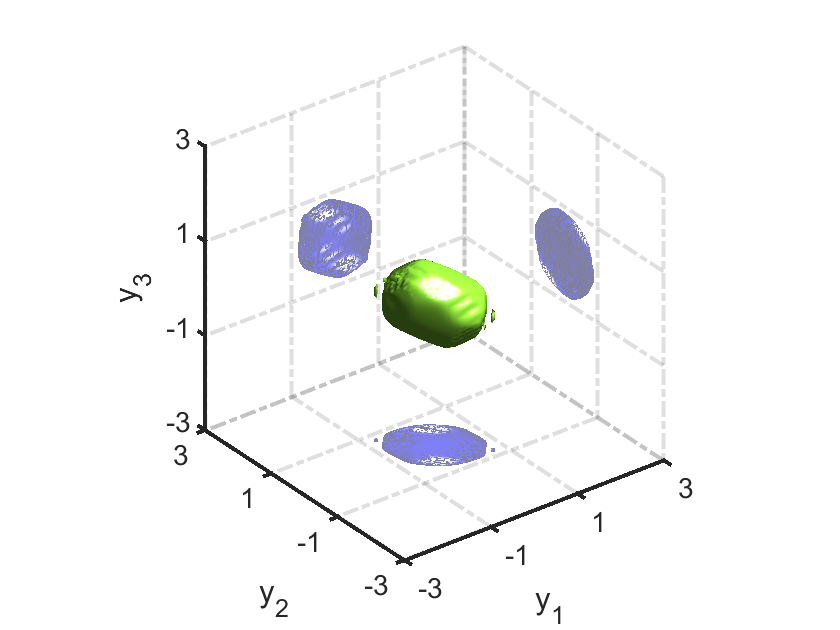}
		
	}

	\caption{Reconstruction of a ball-shaped support.  $\mu=1, \epsilon=1,t_0=4$.
	} \label{fig:3d-rec}
\end{figure}

\begin{figure}[H]
	\centering
	\subfigure[A elliptic support]{
		\includegraphics[scale=0.22]{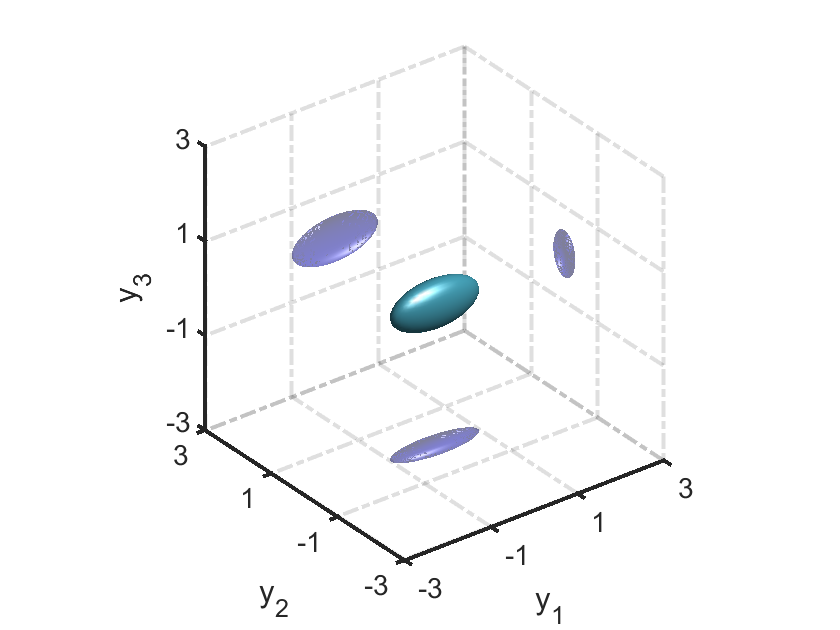}
	}
	\subfigure[$ t_0=5$]{
		\includegraphics[scale=0.18]{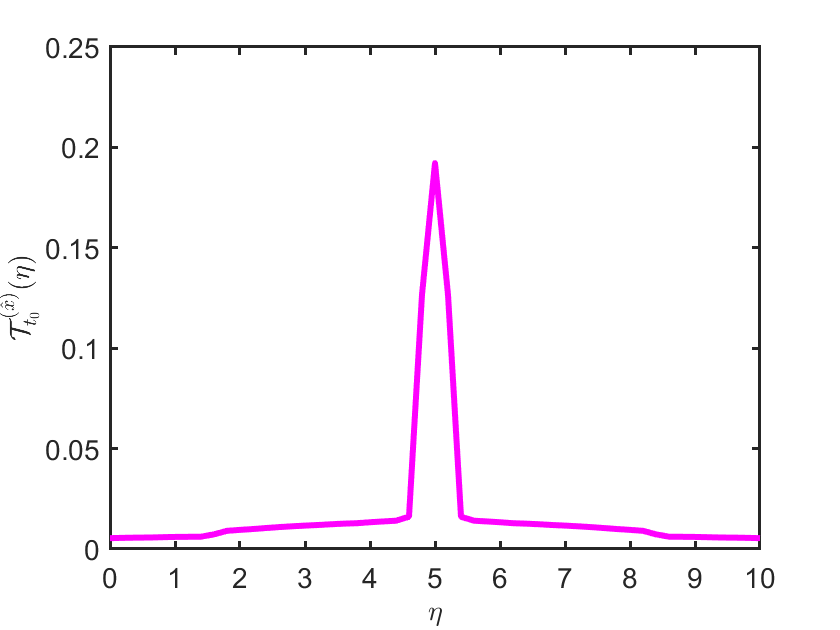}
		
	}
	\subfigure[Iso-surface $=0.006$, $M=20$]{
		\includegraphics[scale=0.22]{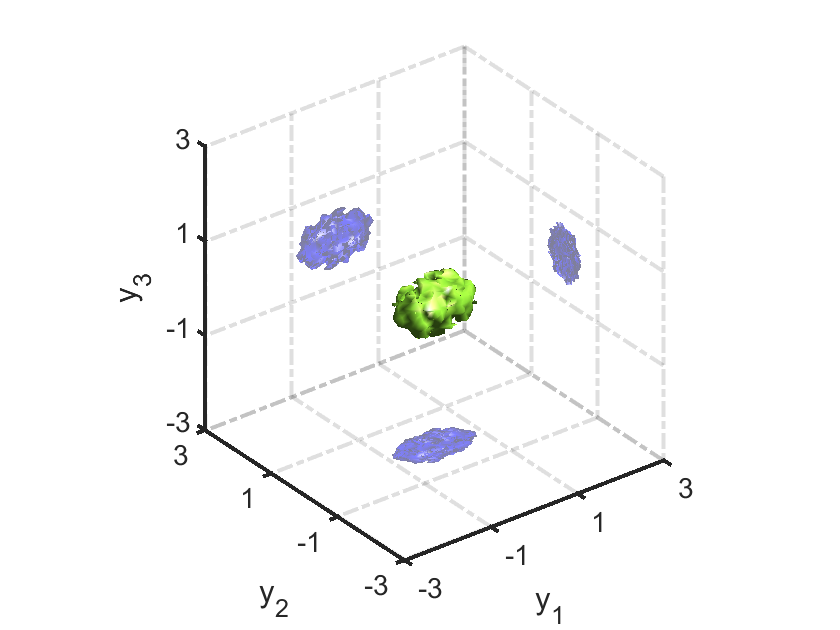}
		
	}

	\caption{Reconstruction of an elliptic support.  $\mu=1, \epsilon=1,t_0=5$.
	} \label{fig:3d-rec}
\end{figure}

\subsection{Robustness with respect to measurement noise}

To investigate the stability of the proposed method,
noisy far-field data are generated according to
\[
E^\infty_\delta(\hat x,\omega)
=
E^\infty(\hat x,\omega)
\bigl(1+\delta\,\xi\bigr),
\]
where $\delta>0$ denotes the relative noise level and
$\xi\sim\mathcal N(0,1)$ is Gaussian noise.

Numerical experiments indicate that the reconstructed slabs
remain stable under moderate noise levels.
In particular, the location of the reconstructed support and
the recovered excitation time are only mildly affected,
while noise mainly reduces the sharpness of reconstructed boundaries.

This stability can be attributed to the intrinsic averaging effect
induced by the multi-frequency integration in the indicator functions,
which suppresses random perturbations in the measurement data.

\begin{figure}[H]
	\centering
	\subfigure[$\delta=0$]{
		\includegraphics[scale=0.22]{FIG/2-1.png}
	}
       \subfigure[$\delta=30\%$]{
		\includegraphics[scale=0.22]{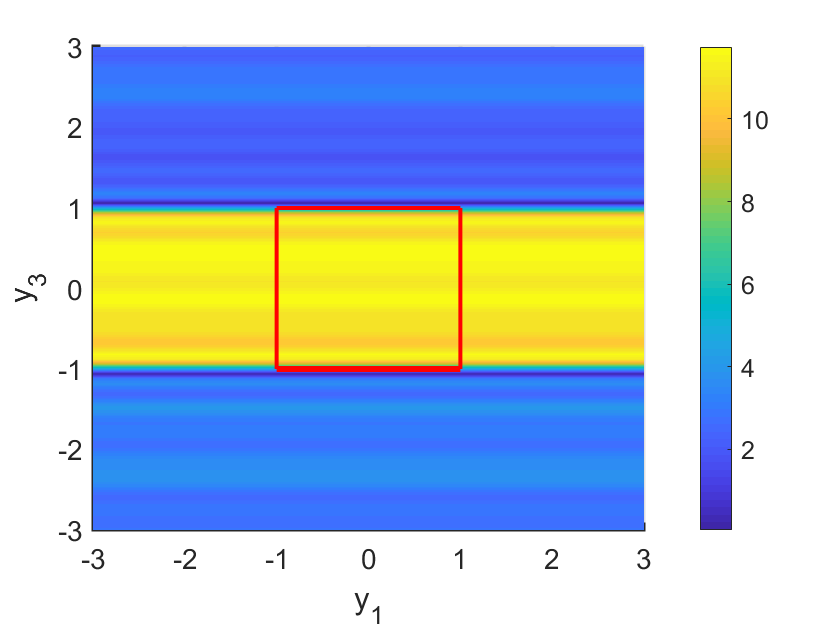}
	}

	\subfigure[$\delta=50\%$]{
		\includegraphics[scale=0.22]{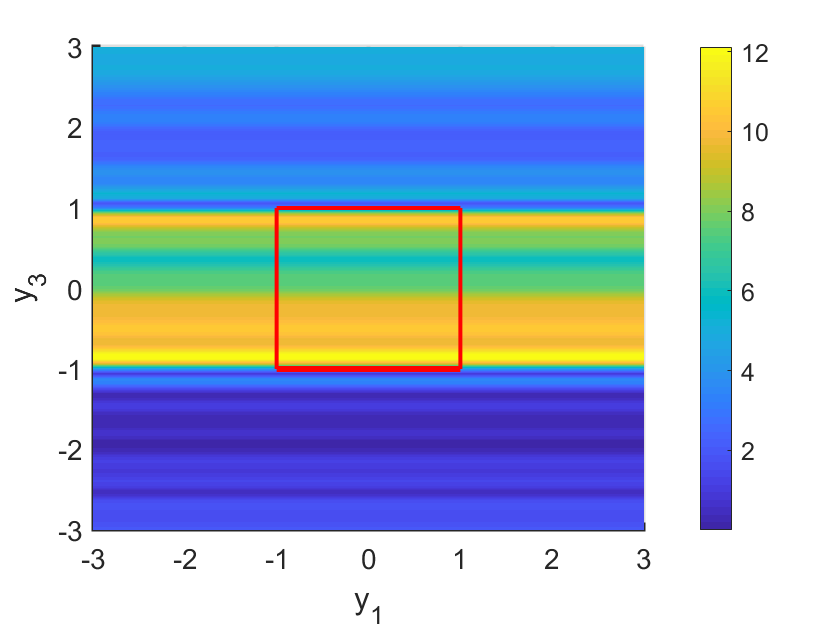}

	}
	\subfigure[$\delta=80\%$]{
		\includegraphics[scale=0.22]{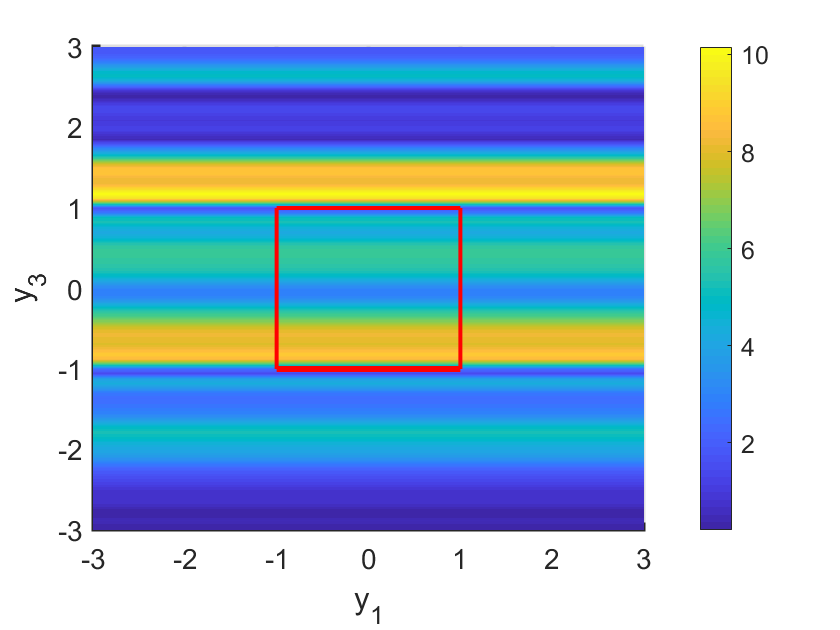}
		
	}

	\caption{Reconstruction of a cube-shaped support with multi-frequency data from single
 observation direction at different noise levels $\delta$.
	} \label{fig:3d-rec}
\end{figure}

To investigate the stability of the proposed method,
noisy far-field data are generated according to
\[
E^\infty_\delta(\hat x,\omega)
=
E^\infty(\hat x,\omega)
\bigl(1+\delta\,\xi\bigr),
\]
where $\delta>0$ denotes the relative noise level and
$\xi\sim\mathcal N(0,1)$ represents complex Gaussian noise.

Figure~8 presents the reconstruction results for different
noise levels. Figure~8(a) corresponds to the noise-free case
($\delta=0$), where the reconstructed slab clearly matches
the exact projection region of the cubic source support.
Figure~8(b)--(d) display the results with
$\delta=30\%$, $50\%$, and $80\%$, respectively.

It can be observed that for moderate noise levels
($\delta=30\%$ and $50\%$),
the slab structure remains clearly identifiable and
the location of the reconstructed region is only slightly affected.
Even when the noise level reaches $\delta=80\%$,
the principal slab structure is still visible,
although boundary oscillations become more pronounced.

The strong robustness of the proposed method can be explained
by the intrinsic averaging effect induced by the frequency
integration in the indicator function.
Since the reconstruction is based on integrating the
multi-frequency data against oscillatory test functions,
random perturbations in the measurements tend to cancel out,
while the coherent structural information associated with
the true source support is reinforced.
This mechanism significantly enhances the noise tolerance
of the method.

\section{Conslusion}
In this work, we extend the direct sampling method to the time-domain electromagnetic inverse source problem governed by the vector Maxwell equations by developing a frequency-domain approach. Using sparse far-field data, we first accurately determine the excitation time of the signal, and then reconstruct the convex hull of the source support. Moreover, the proposed approach remains valid for sources radiating continuously over a finite time interval, provided that either the initiation time or termination time of the radiation is known. Notably, it does not require the source function to be divergence-free. Finally, we consider this approach suitable for elastic waves and capable of being generalized to inverse problems involving extended moving sources.

\newpage


\newpage

\section*{Acknowledgements}

The work of Fenglin Sun was supported by the NSFC (No.12271404) and the work of Hongxia Guo was supported by NSFC  (No.12501591) and Natural Science Foundation of Tianjin (No.24JCQNJC00850). 
The authors wish to express particular gratitude to Professor Guanghui Hu for his insightful and detailed discussions, which have contributed to notable improvements in the results and presentation of this work.


\begin{thebibliography}{00}
\bibliographystyle{unsrt}
\bibitem {Ammari2002} G. Bao, H. Ammari,  and J. Fleming, An inverse source problem for Maxwell’s equation in
magnetoencephalography, 
SIAM Journal on Applied Mathematics, 62 (2002), 1369-1382.

\bibitem{Balanis2005} C. A. Balanis, Antenna Theory: Analysis and Design, Wiley, Hoboken, NJ, 2005.

\bibitem{Dassios2005} G. Dassios, A. Fokas and F. Kariotou, On the non-uniqueness of the inverse MEG problem, Inverse Problems, 21(2005): L1-5.


\bibitem{Bleistein1977} N. Bleistein and J. Cohen, Nonuniqueness in the inverse source problem in acoustics and electromagnetics, Journal of Mathematical Physics, 18(1977): 194-201.

\bibitem {Albanese2006} R. Albanese,  and P. Monk, The inverse source problem for Maxwell's equations, Inverse problems 22(2006): 1023.

\bibitem {Valdivia2012}N P. Valdivia, Electromagnetic source identification using multiple frequency information, Inverse Problems, 28(2012):115002.

\bibitem {Li2024} P. Li and J. Wang, Nonradiating sources of Maxwell's equations, arxiv:2402.10407 (2024).

\bibitem {BaoAmmari2002} G. Bao, H. Ammari and J. Fleming, An inverse source problem for Maxwell’s equation in magnetoencephalography, SIAM Journal on Applied Mathematics, 62(2002): 1369-1382.

\bibitem {Bao2010} G. Bao, J.Lin and F. Triki, A multi-frequency inverse source problem, Journal of Differential Equations, 249(2010): 3443-3465.

\bibitem {BaoLi2015} G. Bao, P. Li, J. Lin and F. Triki, Inverse scattering problems with multi-frequencies, Inverse Problems, 31(2015):093001.

\bibitem {Liu2015} H. Liu and G. Uhlmann, Determining both sound speed and internal source in thermo- and photo-acoustic tomography, Inverse Problems, 31(2015):105005.

\bibitem{CIL} J. Cheng, V. Isakov and S. Lu,  Increasing stability in the inverse source problem with many frequencies, Journal of Differential Equations, 260 (2016): 4786-4804.


\bibitem {Bao2020} G. Bao, P. Li and Y. Zhao, Stability for the inverse source problems in elastic and electromagnetic waves, Journal de Math\'{e}matiques Pures et Appliqu\'{e}es 134 (2020): 122-178.

\bibitem {LiYamamoto2005} S. Li and M. Yamamoto, An inverse source problem for Maxwell's equations in anisotropic media, Applicable Analysis 84(2005): 1051-1067.

\bibitem{HLLZ} G. Hu, P. Li, X. Liu and Y. Zhao,  Inverse source problems in electrodynamics, Inverse Problems and Imaging, 12(2018): 1411--1428.

\bibitem{Hukian2019} G. Hu, Y. Kian, P. Li and Y. Zhao, Inverse moving source problems in electrodynamics, Inverse Problems, 35(2019):075001.

\bibitem{HL2020} G. Hu and J. Li, Uniqueness to inverse source problems in an inhomogeneous medium with a single far-field pattern, SIAM Journal on Mathematical Analysis, 52(2020): 5213-5231.

\bibitem{Isakov2021} V. Isakov and J N. Wang, Uniqueness and increasing stability in electromagnetic inverse source problems, Journal of Differential Equations, 283(2021): 110-135.

\bibitem {Yuan2023} G. Yuan and Y. Zhao, Increasing stability for the inverse source problem in electromagnetic waves with conductivity, Applied Mathematics Letters, 144(2023): 108722.


\bibitem {BaoLu2015} G. Bao, S. Lu, W. Rundell and B. Xu, A recursive algorithm for multifrequency acoustic inverse source problems, SIAM Journal on Numerical Analysis, 53(2015): 1608-1628.


\bibitem{Wang2018}G. Wang, F. Ma, Y. Guo and J. Li, Solving the multi-frequency electromagnetic inverse source problem by the Fourier method, Journal of Differential Equations, 265(2018): 417-443.

\bibitem{Wang2019} X. Wang, M. Song, Y. Guo, H. Li and H. Liu, Fourier method for identifying
electromagnetic source with multi-frequency far field data, Journal of Computational and Applied
Mathematics, 358(2019): 279-292.

\bibitem{GS} R. Griesmaire and C. Schmiedecke, A Factorization method for multifrequency inverse source problem with sparse far-field measurements, SIAM Journal on Imaging Sciences, 10 (2017): 2119-2139.

\bibitem {Griesmaier2018} R. Griesmaier and R K. Mishra and C. Schmiedecke, Inverse Source Problems for Maxwell's Equations and the Windowed Fourier Transform, SIAM Journal on Scientific Computing 40(2018): A1204-A1223.


\bibitem{JL2019} X. Ji and X. Liu, Inverse electromagnetic source scattering problems with multifrequency sparse phased and phaseless far field data, SIAM Journal on Scientific Computing, 41(2019): B1368-B1388.

\bibitem{LiLiu2023} J. Li and X. Liu, Reconstruction of multiscale electromagnetic sources from multifrequency electric far field patterns at sparse observation directions, Multiscale Modeling \& Simulation, 21(2023): 753-775.

\bibitem{Harris2024} I. Harris, T. Le and D. Nguyen, A direct reconstruction method for radiating sources in Maxwell’s equations with single-frequency data, Inverse Problems, 41(2024): 015003.

\bibitem{Liu2024} X. Liu and Q. Shi, The high resolution sampling methods for acoustic sources from multi-frequency far field patterns at sparse observation directions, arxiv:2408.10829, 2024.

\bibitem{Liu2025} X. Liu and Q. Shi, A quantitative sampling method for elastic and electromagnetic sources, Journal of Computational Physics (2025): 114251.

\bibitem{AHLS} A. Alzaalig, G. Hu, X. Liu and J. Sun, Fast acoustic source imaging using multi-frequency sparse data, Inverse Problems, 36 (2020): 025009.

\bibitem{GGH} H. Guo and G. Hu, Inverse wave-number-dependent source problems for the Helmholtz equation, SIAM Journal on Numerical Analysis, 62(2024): 1372-1393.

\bibitem{GHZ} H. Guo, G. Hu and M. Zhao,  Direct sampling method to inverse wave-number-dependent source problems: determination of the support of a stationary source, Inverse Problems, 39(2023): 105008.

\bibitem{Zhao2024} M. Zhao, S. Si and G. Hu, Uniqueness, stability and algorithm for an inverse wave-number-dependent source problem, Inverse Problems, 40(2024): 125019. 

\bibitem{Ma2025} G. Ma, H. Guo and G. Hu, A frequency-domain method to inverse moving source problem with unknown radiating moment, arXiv:2602.04207.

\end{thebibliography}
\end{document}